\def\titlerunning#1{\gdef\titrun{#1}}
\def\author#1{\gdef\autrun{\def\and{\unskip, }#1}\gdef\@author{#1}}
\def\address#1{{\def\and{\\\hspace*{18pt}}\renewcommand{\thefootnote}{}%
\footnote {#1}}%
\markboth{\autrun}{\titrun}}
\def\email#1{e-mail: #1}
\def\subjclass#1{{\renewcommand{\thefootnote}{}%
\footnote{\emph{Mathematics Subject Classification (2010):} #1}}}
\def\keywords#1{\par\medskip
\noindent\textbf{Keywords.} #1}
\newtheorem{thm}{Theorem}[section]
\newtheorem{prop}[thm]{Proposition}
\theoremstyle{definition}
\newtheorem{rem}[thm]{Remark}
\numberwithin{equation}{section}
\begin{document}


\baselineskip=17pt


\titlerunning{}

\title{Twistor geometry of Hermitian surfaces induced by canonical connections}

\author{Jixiang Fu
\and
Xianchao Zhou}

\date{}

\maketitle

\address{J. Fu: School of Mathematical Sciences, Fudan University, Shanghai 200433, China; \email{majxfu@fudan.edu.cn}
\and
X. Zhou: Department of Applied Mathematics, Zhejiang University of Technology, Hangzhou 310023, China; \email{zhouxianch07@zjut.edu.cn}}

\subjclass{Primary 53C28, 53B35, 53B21; Secondary 53C25, 53C24, 53C56}


\begin{abstract}
In this paper, following the constructions of N. R. O'Brian, J. H. Rawnsley and I. Vaisman, we  define four almost Hermitian structures (up to conjugation) on the twistor space of a Hermitian surface by using canonical connections, including the Lichnerowicz connection and the Chern connection. We also study the relations between the natural geometry of the twistor spaces and the geometry of Hermitian surfaces.

\keywords{Twistor space, projective bundle, self-dual, principal bundle, canonical connection, balanced metric}
\end{abstract}

\section{Introduction}
The twistor construction is an important technique in differential geometry and mathematical physics. This approach was first proposed by R. Penrose in 1960s. In 1978, the Riemannian version of R. Penrose's twistor programme was presented by M. F. Atiyah, N. J. Hitchin and I. M. Singer\cite{AHS}.

Basically, to each oriented Riemannian 4-manifold $M$, one can associate canonically a 6-manifold $Z$ (called the twistor space of $M$). $Z$ naturally admits
an almost complex structure $\mathbb{J}_+$ which preserves the decomposition of tangent bundle $TZ$ into horizontal component and vertical component, with respect to the Levi-Civita connection of Riemannian 4-manifold $M$. M. F. Atiyah, N. J. Hitchin and I. M. Singer\cite{AHS} proved that $\mathbb{J}_+$ is integrable if and only if $M$ is self-dual. Therefore, they established an elegant correspondence between  Yang-Mills fields on 4-manifolds and holomorphic vector bundles on complex 3-manifolds. On the other hand, in 1985, J. Eells and S. Salamon\cite{ES} introduced another almost complex structure $\mathbb{J}_-$ which, by contrast with $\mathbb{J}_+$, is never integrable. However, $\mathbb{J}_-$ plays an important role in the theory of harmonic map.

In the years since then, there are many results related to the generalizations of the twistor theory of Riemannian 4-manifold. S. Salamon\cite{Sa1} gave the generalized twistor space $Z$ of a Quaternionic K\"{a}hler  manifold $M$,  whose fibre $Z_x$ over a point $x\in M$ is the space of all almost complex structures on $T_x M$ compatible with the $Sp(n)Sp(1)$-structure. He also showed that  the twistor space $Z$ has a natural complex structure. Motivated by these examples, L. Berard Bergery and T. Ochiai\cite{BO} proposed a general theory of the twistor space $Z$ from the point of view of $G$-structure, and considered a natural almost complex structure on $Z$ with respect to a $G$-connection.  N. R. O'Brian and J. H. Rawnsley\cite{OR} studied  the integrability of natural almost complex structures on certain fibre bundles, which generalize the 4-dimensional twistor theory to arbitrary even dimension, by using the representation theory. As an important example, they studied the Grassmann bundles of the tangent bundle of an almost Hermitian manifold. R. Bryant\cite{Bry} developed a method for constructing holomorphic twistor spaces over Riemannian symmetric spaces of even dimension. C. K. Peng and Z. Z. Tang\cite{PT} gave a detailed description
of the twistor space over an oriented even dimensional Riemannian manifold by using the method of moving frames.

Of course, the interplay between 4-dimensional conformal geometry and the corresponding twistor geometry is one of the most attractive topics. There are some interesting applications.  By using examples of compact complex 3-manifolds that arise as twistor spaces, F. Campana\cite{Cam},
C. LeBrun and Y. S. Poon\cite{LP} proved that the class $\mathscr{C}$ of compact complex manifolds, bimeromorphic to K\"{a}hler manifolds, is not stable under small deformations of complex structure. J. Fine and D. Panov\cite{FP}  introduced the concept of definite connection on $SO(3)$-bundle over an oriented 4-manifold. They showed many non-K\"{a}hler symplectic Calabi-Yau 6-manifolds from the twistor spaces of Riemannian 4-manifolds \cite{FP}. There are also many studies with respect to the metric properties and curvature properties on the twistor space of Riemannian 4-manifold. For example, N. J. Hitchin\cite{Hit} showed that if the twistor space $(Z, \mathbb{J}_+)$ of  a compact self-dual 4-manifold $M$ admits a K\"{a}hler metric, then $M$ is the 4-sphere $S^4$ or the complex projective plane $\mathbb{C}P^2$. In fact, there is a  natural 1-parameter family of Riemannian metrics $g_t$ on the twistor space $Z$. Meanwhile, $\mathbb{J}_+$ and $\mathbb{J}_-$ are orthogonal almost complex structures with respect to the metrics $g_t$. Thus, it is natural to study the relations between the almost Hermitian geometry of $Z$ and the Riemannian geometry of 4-manifold $M$ \cite{FK,Mu,JR,DM,BN,DGM,FZ,DGM}. By using the method of moving frames, J. X. Fu and X. C. Zhou\cite{FZ} systematically studied special metric conditions (including the balanced metric condition, first Gauduchon metric condition\cite{FWW}) on almost Hermitian twistor spaces $(Z,g_t,\mathbb{J}_{\pm})$,  which are used to characterize  Riemannian 4-manifolds. Recently, M. Verbitsky\cite{Ver} obtained a generalization of Hitchin's theorem on the K\"{a}hler twistor space. He proved that if the twistor space $(Z, \mathbb{J}_+)$ of  a compact self-dual 4-manifold $M$ admits an SKT metric, then $M$ admits a K\"{a}hler metric. Hence $M$ is the 4-sphere $S^4$ or the complex projective plane $\mathbb{C}P^2$.

The main concern of this paper is the twistor geometry of Hermitian surfaces. There are two motivations. One is that for a Hermitian surface $M$, its twistor space $Z$ can be identified with the projective bundle $\mathbb{P}(T^{1,0}M)$. Meanwhile, following the ideas of N. R. O'Brian, J. H. Rawnsley \cite{OR} and I. Vaisman \cite{Va2},  we can define more geometric structures on the twistor space $Z$ by using canonical connections on $M$. We observe that these constructions coincide with the familiar geometry of the flag manifold $\frac{SU(3)}{S(U(1)\times U(1)\times U(1))}$, which can be considered as the twistor space of complex projective plane $\mathbb{C}P^2$. The other motivation is that we want to find more relations  between the (almost) complex geometry of the twistor spaces and the geometry of Hermitian surfaces. In this paper, we give a comprehensive study of the four natural almost Hermitian
structures on the twistor space $Z=\mathbb{P}(T^{1,0}M)$ associated with the Lichnerowicz connection and also with the Chern connection. For the Lichnerowicz connection, the induced natural almost Hermitian structures on the twistor space $Z=\mathbb{P}(T^{1,0}M)$ are denoted by $(\mathbb{J}_i^L, \mathbb{K}_i^L(\lambda))$, $i=1,2,3,4$. We prove that $\mathbb{J}_3^L$ (or $\mathbb{J}_4^L$) is integrable if and only if the Ricci tensor of the Levi-Civita connection on  $(M, J, h)$ is $J$-invariant (in Proposition 3.1). The symplectic metric condition and the balanced metric condition of the natural almost Hermitian metrics $\mathbb{K}_i^L(\lambda)$ on $Z=\mathbb{P}(T^{1,0}M)$ are studied (in Theorem 3.4 and Theorem 3.5).  For the Chern connection, the induced natural almost Hermitian structures on the twistor space $Z=\mathbb{P}(T^{1,0}M)$ are denoted by $(\mathbb{J}_i^{Ch}, \mathbb{K}_i^{Ch}(\lambda))$, $i=1,2,3,4$. We prove that $\mathbb{J}_1^{Ch}=\mathbb{J}_1^L$ (in Proposition 4.1, in fact as showed in Remark 4.2, a family of canonical Hermitian connections induce the same $\mathbb{J}_1$), and all $\mathbb{J}_i^{Ch}$ are conformally invariant (in Proposition 4.3). We also consider the balanced metric condition of the natural almost Hermitian metrics $\mathbb{K}_i^{Ch}(\lambda)$ on $Z=\mathbb{P}(T^{1,0}M)$ (in Theorem 4.5).

The paper is organized as follows. In Section 2, we give some essential facts of the geometry of a Hermitian surface $M$, and  P. Gauduchon's 1-parameter family $D^t$ of canonical Hermitian connections on a Hermitian manifold. Vaisman's constructions of  almost Hermitian
structures on the twistor space $Z=\mathbb{P}(T^{1,0}M)$ associated with any unitary connection are also presented. In Section 3, we focus on the twistor geometry  induced by the Lichnerowicz connection. In Section 4, we go on to study the twistor geometry  induced by the Chern connection.  In Section 5, we give some related discussions, including the natural Hermitian structure on the projective bundle (this structure often used in complex algebraic geometry), and G. Deschamps's works on the twistor geometry of Riemannian 4-manifolds. Finally, in the appendix, we make some explicit calculations  for the twistor geometry of complex projective plane $\mathbb{C}P^2$ with the Fubini-Study metric $h_{FS}$.

\section{Preliminaries and notations: Hermitian surfaces and twistor constructions}
Let $M$ be an oriented 4-manifold with a smooth Riemannian metric $h$. The Hodge star operator gives a map
$\ast:\wedge^2(T^\ast M)\rightarrow \wedge^2(T^\ast M)$ with $\ast^2=1$. Accordingly, its eigenvalues are
$\pm1$ and the bundle of two-forms splits
\begin{equation}
\wedge^2(T^\ast M)=\wedge^+(T^\ast M)\oplus \wedge^-(T^\ast M)
\end{equation}
into two eigenspaces. $\wedge^+(T^\ast M)$ (resp. $\wedge^-(T^\ast M)$) is called the bundle of self-dual (resp. anti-self-dual) 2-forms. This decomposition is
conformally invariant with respect to the Riemannian metric $h$. However, reversing the orientation of $M$ interchanges $\wedge^+(T^\ast M)$ and $\wedge^-(T^\ast M)$.

Using the metric $h$, we have the identifications of $\wedge^2(T^\ast M)=\wedge^2(TM)$ and $\wedge^2(TM)=\wedge^+(TM)\oplus \wedge^-(TM)$. Thus the Riemannian curvature tensor $R$ of $(M, h)$ can be considered as a self-adjoint operator
$\hat{R}:\wedge^2(T^\ast M)\rightarrow \wedge^2(T^\ast M)$ and so, with respect to the decomposition (2.1), it decomposes into parts
\begin{equation}
\hat{R}=\left(\begin{array}{cc}W^{+}+\frac{s}{12}\text{Id}&\text{Ric}_0^\ast\\\text{Ric}_0&W^{-}+\frac{s}{12}\text{Id}
\end{array}\right),
\end{equation}
where $W^+$ (resp. $W^-$) is the
self-dual (resp. anti-self-dual) Weyl curvature operator, $s$ is the scalar curvature, $\text{Ric}_0$ is the trace-free Ricci curvature operator, and
 $\text{Ric}_0^\ast$ is the transpose of $\text{Ric}_0$\cite{Bes}.

An oriented Riemannian 4-manifold $(M,h)$ is called self-dual (resp. anti-self-dual) if $W^-=0$ (resp. $W^+=0$). It is well-known that $(M,h)$ is an Einstein manifold if and only of $\text{Ric}_0=0$.

In this paper, we especially consider a Hermitian surface $(M, J, h)$, i.e. a Hermitian manifold of real dimension 4 with a complex structure $J$ and a compatible Riemannian metric $h$. Let $F$ be the fundamental 2-form defined by $F(X,Y)=h(JX,Y)$, for any tangent vector fields $X$ and $Y$ on $M$.  An interesting differential form of
$(M, J, h)$ is the Lee form $\alpha=J\delta F=-\delta F\circ J$, where $\delta$ denotes the codifferential with respect to the metric $h$. In this case, we have
\begin{eqnarray}
&&\wedge^-(T^\ast M)=[\wedge^{1,1}_0(T^\ast M)],\notag\\
&&\wedge^+(T^\ast M)=[[\wedge^{2,0}(T^\ast M)]]\oplus \mathbb{R} F,
\end{eqnarray}
where$[[\wedge^{2,0}(T^\ast M)]]\otimes_\mathbb{R} \mathbb{C}=\wedge^{2,0}\oplus \wedge^{0,2}$,  $[\wedge^{1,1}(T^\ast M)]\otimes_\mathbb{R} \mathbb{C}=\wedge^{1,1}$,
and $[\wedge^{1,1}_0(T^\ast M)]$ is the orthogonal complement of
$F$ in $[\wedge^{1,1}(T^\ast M)]$. These notations  refer to S. Salamon's book\cite{Sa2}.

For convenience, we give local orthonormal basis of these vector bundles. Let $(e_1, e_2=Je_1, e_3, e_4=Je_3)$ be a local $J$-adapted orthonormal frame (sometimes, called unitary frame) on $M$, its dual is denoted by $(\vartheta^1, \vartheta^2, \vartheta^3, \vartheta^4)$. Then a local orthonormal basis of $\wedge^\pm(TM)$ is given by
\begin{eqnarray}
&&\epsilon_1^\pm=\frac{1}{\sqrt{2}}(e_1\wedge e_2\pm e_3\wedge e_4),\\
&&\epsilon_2^\pm=\frac{1}{\sqrt{2}}(e_1\wedge e_3\pm e_4\wedge e_2),\\
&&\epsilon_3^\pm=\frac{1}{\sqrt{2}}(e_1\wedge e_4\pm e_2\wedge e_3),
\end{eqnarray}
respectively.
Dually, for the bundles $\wedge^\pm(T^\ast M)$, a local orthonormal basis is given by
\begin{eqnarray}
&&\alpha_\pm^1=\frac{1}{\sqrt{2}}(\vartheta^1\wedge\vartheta^2\pm\vartheta^3\wedge\vartheta^4),\\
&&\alpha_\pm^2=\frac{1}{\sqrt{2}}(\vartheta^1\wedge\vartheta^3\pm\vartheta^4\wedge\vartheta^2),\\
&&\alpha_\pm^3=\frac{1}{\sqrt{2}}(\vartheta^1\wedge\vartheta^4\pm\vartheta^2\wedge\vartheta^3),
\end{eqnarray}
respectively.

Set
\begin{eqnarray*}
&&u_1=\frac{1}{\sqrt{2}}(e_1-\sqrt{-1}e_2),~~u_2=\frac{1}{\sqrt{2}}(e_3-\sqrt{-1}e_4),\\
&&\eta^1=\frac{1}{\sqrt{2}}(\vartheta^1+\sqrt{-1}\vartheta^2),~~\eta^2=\frac{1}{\sqrt{2}}(\vartheta^3+\sqrt{-1}\vartheta^4).
\end{eqnarray*}
Then we have
\begin{eqnarray}
&&\epsilon_1^+=\frac{\sqrt{-1}}{\sqrt{2}}(\overline{u_1}\wedge u_1+\overline{u_2}\wedge u_2),
~~\epsilon_1^-=\frac{\sqrt{-1}}{\sqrt{2}}(\overline{u_1}\wedge u_1-\overline{u_2}\wedge u_2),\\
&&\epsilon_2^+=\frac{1}{\sqrt{2}}(u_1\wedge u_2+\overline{u_1}\wedge \overline{u_2}),
~~~~~\epsilon_2^-=\frac{1}{\sqrt{2}}(u_1\wedge \overline{u_2}+\overline{u_1}\wedge u_2),\\
&&\epsilon_3^+=\frac{\sqrt{-1}}{\sqrt{2}}(u_1\wedge u_2-\overline{u_1}\wedge \overline{u_2}),
~~\epsilon_3^-=\frac{\sqrt{-1}}{\sqrt{2}}(\overline{u_1}\wedge u_2-u_1\wedge\overline{u_2}),
\end{eqnarray}
and dually,
\begin{eqnarray}
&&\alpha_+^1=\frac{\sqrt{-1}}{\sqrt{2}}(\eta^1\wedge\overline{\eta^1}+\eta^2\wedge\overline{\eta^2}),
~~\alpha_-^1=\frac{\sqrt{-1}}{\sqrt{2}}(\eta^1\wedge\overline{\eta^1}-\eta^2\wedge\overline{\eta^2}),\\
&&\alpha_+^2=\frac{1}{\sqrt{2}}(\eta^1\wedge\eta^2+\overline{\eta^1}\wedge\overline{\eta^2}),
~~~~~\alpha_-^2=\frac{1}{\sqrt{2}}(\eta^1\wedge\overline{\eta^2}+\overline{\eta^1}\wedge\eta^2),\\
&&\alpha_+^3=\frac{\sqrt{-1}}{\sqrt{2}}(\overline{\eta^1}\wedge\overline{\eta^2}-\eta^1\wedge\eta^2),
~~\alpha_-^3=\frac{\sqrt{-1}}{\sqrt{2}}(\eta^1\wedge\overline{\eta^2}-\overline{\eta^1}\wedge\eta^2).
\end{eqnarray}

Next, we shall introduce the definition of  canonical Hermitian connections on  Hermitian manifolds.

A Hermitian connection (sometimes also called unitary connection) on a Hermitian manifold $(M, J, h)$ is a connection in the bundle $Q=U(M)$ of unitary frames on $M$, that is, a linear connection which is metric ($h$ is parallel) and complex  ($J$ is parallel). The existence of such connections is assured by the connection theory
in principal bundles\cite{KN}. P. Gauduchon\cite{Gau} introduced a 1-parameter family $D^t$ of canonical Hermitian connections on Hermitian manifold as follows.
\begin{align}\label{E:1}
h(D^t_{X_1} X_2, X_3) &= h(\nabla_{X_1} X_2, X_3)+\frac{1}{4}[dF(JX_1, JX_2, JX_3)-dF(JX_1, X_2, X_3)]\notag\\
           &\quad ~~~~~~~~~~~~~~~~~~~~~~~~~~-\frac{t}{4}[dF(JX_1, JX_2, JX_3)+dF(JX_1, X_2, X_3)],
\end{align}
where $\nabla$ is the Levi-Civita connection on $(M, J, h)$, $X_1, X_2$ and $X_3$ are tangent vector fields on $M$. When $h$ is a  K\"{a}hler metric, i.e. $dF=0$, all $\{D^t\}$ equal to  $\nabla$.

In this family, $D^0$ is the Lichnerowicz connection, $D^1$ is the Chern connection, $D^{-1}$ is the Bismut connection. Let $R$, $K$ and $\tilde{K}$ be the curvature tensor of $\nabla$, $D^1$ and $D^{-1}$, respectively. Set
\begin{align}\label{E:1}
R(X_1, X_2, X_3, X_4) &= h(R(X_3, X_4)X_2, X_1)\notag\\
                      &=h(\nabla_{X_3}\nabla_{X_4}X_2-\nabla_{X_4}\nabla_{X_3}X_2-\nabla_{[X_3,X_4]}X_2, X_1),
\end{align}
and similar for curvature tensors $K$ and $\tilde{K}$. There are many studies of curvature properties with respect to these canonical Hermitian connections, refer to\cite{LY, YZ} and the references therein.

For a Hermitian surface, by direct calculations, we get the following useful curvature relations\cite{Va1,IP}:
\begin{align}\label{E:1}
K(X_1, X_2, X_3, X_4) &= R(X_1, X_2, X_3, X_4)\notag\\
                      &~~~~+\frac{1}{2}d(\alpha\circ J)(X_3, X_4)F(X_1, X_2)\notag\\
                      &~~~~+\frac{1}{2}[L(X_4, X_2)h(X_3, X_1)+L(X_3, X_1)h(X_4, X_2)]\notag\\
                      &~~~~-\frac{1}{2}[L(X_3, X_2)h(X_4, X_1)+L(X_4, X_1)h(X_3, X_2)]\notag\\
                      &~~~~+\frac{|\alpha|^2}{4}[h(X_3, X_2)h(X_4, X_1)-h(X_4, X_2)h(X_3, X_1)],
\end{align}
\begin{align}\label{E:1}
\tilde{K}(X_1, X_2, X_3, X_4) &= R(X_1, X_2, X_3, X_4)\notag\\
                      &~~~~+\frac{1}{2}(\nabla_{X_3}(\alpha\circ J\wedge F))(X_4, X_2, X_1)\notag\\
                      &~~~~-\frac{1}{2}(\nabla_{X_4}(\alpha\circ J\wedge F))(X_3, X_2, X_1)\notag\\
                      &~~~~+\frac{1}{4}\sum _{i=1}^{4}(\alpha\circ J\wedge F)(X_4, X_1, e_i)(\alpha\circ J\wedge F)(X_3, X_2, e_i)\notag\\
                      &~~~~-\frac{1}{4}\sum _{i=1}^{4}(\alpha\circ J\wedge F)(X_3, X_1, e_i)(\alpha\circ J\wedge F)(X_4, X_2, e_i),
\end{align}
where $L(X, Y)=(\nabla_X \alpha)Y+\frac{1}{2}\alpha(X)\alpha(Y)$, $(e_1, e_2, e_3, e_4)$ is a local orthonormal frame on $M$.

At the end of this section, we will show some basic geometric structures defined by the twistor method.

L. Berard Bergery, T. Ochiai\cite{BO}, and N. R. O'Brian, J. H. Rawnsley\cite{OR} proposed a general construction of the twistor space $Z$ for an arbitrary even dimensional manifold with $G$-structure. They also studied the integrability of a natural almost complex structure $\mathbb{J}_Z$ (similar to $\mathbb{J}_+$) on $Z$. In particular, for a Hermitian manifold $(M, J, h)$, N.R. O'Brian and J.H. Rawnsley\cite{OR} showed that the Grassmann bundles $G_k(TM)$ of complex $k$-planes (i.e. real $J$-stable $2k$-planes) in $TM$ can be considered as the reduced twistor space of $M$. In fact, $G_k(TM)$ can be identified with the complex Grassmann bundle of the
complex tangent bundle $T^{1,0}M$. Set $Z=G_k(T^{1,0}M)$. If $(M, J, h)$ is a Hermitian surface, then $Z=\mathbb{P}(T^{1,0}M)$ coincides with the following associated bundle definition of the twistor space of 4-manifold\cite{Va2},
$$Z=O_-(M)\times_{SO(4)}SO(4)/U(2),$$
where $O_-(M)$ is the $SO(4)$-principal bundle of all negative orthonormal frames over $M$.

Unlike the Riemannian twistor geometry, there are more natural geometry structures on $Z=\mathbb{P}(T^{1,0}M)$ induced by various Hermitian connections.
In the following, we will use the language of principal bundle to study the geometry structures on $Z=\mathbb{P}(T^{1,0}M)$ as in\cite{Va2}. We have the following commutative diagram:
$$\xymatrix{
  Q=U(M) \ar[d]_{\pi_1} \ar[dr]^{\pi}        \\
  Z \ar[r]_{\pi_2}  & M              }
  $$
where $\pi$ and $\pi_2$ are the standard projections, $\pi_1((u_1,u_2))=Span_\mathbb{C}\{u_1\}$.

Associated with any connection $\phi=(\phi_b^a)$ on the principal bundle $Q$, there exist four distinguished almost Hermitian structures (up to conjugation) on $Z=\mathbb{P}(T^{1,0}M)$. As in\cite{JR,Va2,FZ}, to describe these almost Hermitian structures, we first define locally $(1,0)$-forms
on $Z=\mathbb{P}(T^{1,0}M)$. On $Q$, together with the connection form $\phi=(\phi_b^a)$, there exists a canonical $\mathbb{C}^2$-valued 1-form
$\varphi=(\varphi^1, \varphi^2)^t$. If $u: U\subset Z\rightarrow Q$ is a local section of the fibration $\pi_1: Q\rightarrow Z$,
then $\{u^\ast\varphi^1, u^\ast\varphi^2, u^\ast\overline{\varphi^1}, u^\ast\overline{\varphi^2}, u^\ast\phi_2^1, u^\ast\overline{\phi_2^1}\}$ is a local basis
of complex cotangent bundle of $Z=\mathbb{P}(T^{1,0}M)$. Hereafter, for convenience, we omit the pull-back mapping $u^\ast$ without ambiguity.

Now, set $\varphi^3=\phi_2^1$, we can define four natural almost complex structures $\mathbb{J}_i$ on $Z=\mathbb{P}(T^{1,0}M)$ as follows.

~~~~~~~~~~~~~~~~~~~~~~~~~~~~~~~$\mathbb{J}_1$:~~a basis of $(1, 0)$-forms is $\{\varphi^1, \overline{\varphi^2}, \varphi^3\}$;

~~~~~~~~~~~~~~~~~~~~~~~~~~~~~~~$\mathbb{J}_2$:~~a basis of $(1, 0)$-forms is $\{\varphi^1, \overline{\varphi^2}, \overline{\varphi^3}\}$;

~~~~~~~~~~~~~~~~~~~~~~~~~~~~~~~$\mathbb{J}_3$:~~a basis of $(1, 0)$-forms is $\{\varphi^1, \varphi^2, \varphi^3\}$;

~~~~~~~~~~~~~~~~~~~~~~~~~~~~~~~$\mathbb{J}_4$:~~a basis of $(1, 0)$-forms is $\{\varphi^1, \varphi^2, \overline{\varphi^3}\}$.

\noindent Of course, the above constructions are well-defined. Indeed, if $\hat{u}: V\subset Z\rightarrow Q$ is another local section of the fibration $\pi_1: Q\rightarrow Z$, then $\hat{u}=u\cdot \mathfrak{a}^{-1}$, where $\mathfrak{a}\in C^\infty(U\cap V, U(1)\times U(1))$, i.e. $\mathfrak{a}$=diag($e^{i\beta_1}$, $e^{i\beta_2}$). From\cite{KN}, by direct calculations, we have
$\hat{u}^\ast \varphi^1=e^{i\beta_1} u^\ast \varphi^1$, $\hat{u}^\ast \varphi^2=e^{i\beta_2} u^\ast \varphi^2$,
$\hat{u}^\ast \varphi^3=e^{i(\beta_1-\beta_2)} u^\ast \varphi^3$. Moreover, the pull-back forms of $\varphi^1\wedge\overline{\varphi^1}$, $\varphi^2\wedge\overline{\varphi^2}$, $\varphi^3\wedge\overline{\varphi^3}$ and $\varphi^1\wedge\overline{\varphi^2}\wedge\overline{\varphi^3}$ are globally defined on $Z$. Hence, the first Chern class $c_1(Z, \mathbb{J}_2)=0$\cite{ES,Va2}.

As in the Riemannian twistor space\cite{JR, FZ}, there exists a natural family of Riemannian metrics $h_\lambda$ on $Z=\mathbb{P}(T^{1,0}M)$,
\begin{equation}
h_\lambda=u^\ast(\varphi^1\cdot\overline{\varphi^1}+\varphi^2\cdot\overline{\varphi^2}+\lambda^2\varphi^3\cdot\overline{\varphi^3}),
\end{equation}
where parameter $\lambda>0$, $u$ is a local section of the fibration $\pi_1: Q\rightarrow Z$.

From the definition of $\mathbb{J}_i$, it is easy to see that $\mathbb{J}_i$ are orthogonal almost complex structures with respect to
 $h_\lambda$. The associated fundamental 2-forms are denoted by
 \begin{equation}
\mathbb{K}_1(\lambda)=\sqrt{-1}(\varphi^1\wedge\overline{\varphi^1}+\overline{\varphi^2}\wedge \varphi^2+\lambda^2\varphi^3\wedge\overline{\varphi^3}),
\end{equation}
\begin{equation}
\mathbb{K}_2(\lambda)=\sqrt{-1}(\varphi^1\wedge\overline{\varphi^1}+\overline{\varphi^2}\wedge \varphi^2+\lambda^2\overline{\varphi^3}\wedge \varphi^3),
\end{equation}
\begin{equation}
\mathbb{K}_3(\lambda)=\sqrt{-1}(\varphi^1\wedge\overline{\varphi^1}+\varphi^2\wedge \overline{\varphi^2}+\lambda^2\varphi^3\wedge\overline{\varphi^3}),
\end{equation}
\begin{equation}
\mathbb{K}_4(\lambda)=\sqrt{-1}(\varphi^1\wedge\overline{\varphi^1}+\varphi^2\wedge \overline{\varphi^2}+\lambda^2\overline{\varphi^3}\wedge\varphi^3).
\end{equation}
Here we omit the pull-back mapping $u^\ast$. In fact, we can define 3-parameter family of almost Hermitian structures
$h_{\lambda_1,\lambda_2,\lambda_3}$ on $Z=\mathbb{P}(T^{1,0}M)$ as follows:
\begin{equation}
h_{\lambda_1,\lambda_2,\lambda_3}=\lambda_1^2\varphi^1\cdot\overline{\varphi^1}+\lambda_2^2\varphi^2\cdot\overline{\varphi^2}
+\lambda_3^2\varphi^3\cdot\overline{\varphi^3},
\end{equation}
where parameters $\lambda_1>0$, $\lambda_2>0$,$\lambda_3>0$. In the present paper, we only consider the natural case $h_\lambda$.

In the following sections, we consider some canonical connections on the principal bundle $Q$, and then study the corresponding
almost Hermitian twistor geometry by using the method of moving frames.

\section{Twistor geometry I: induced by the Lichnerowicz connection}

In this section, corresponding to the Lichnerowicz connection (also called the first canonical connection) on the principal bundle of unitary frames over a Hermitian surface $M$, we define four natural almost Hermitian
structures, denoted by $(\mathbb{K}_i^L(\lambda), \mathbb{J}_i^L)$, for $i=1, 2, 3, 4,$ on the twistor space $Z=\mathbb{P}(T^{1,0}M)$. Then
we consider the integrability and conformal property of $\mathbb{J}_i^L$.  Some metric properties of $(\mathbb{K}_i^L(\lambda), \mathbb{J}_i^L)$ on $Z=\mathbb{P}(T^{1,0}M)$ are also studied.

Let $(M, J, h)$ be a Hermitian surface with the natural orientation $\frac{F^2}{2}$. Let $P=SO(M)$ be the $SO(4)$-principal bundle of oriented orthonormal frames over $M$, $Q=U(M)$  the $U(2)$-principal bundle of unitary frames over $M$. Of course, $Q$ is a principal subbundle  of $P$.
The $\mathbb{R}^4$-valued canonical form on $P$, denoted by $\theta=(\theta^1, \theta^2, \theta^3, \theta^4)^t$, is
given by
$$\theta(X)=e^{-1}({\pi_P}_{\ast}(X)),~~~~X\in T_{(x,e)}P,$$
where $e$ is identified with a linear map $e:\mathbb{R}^4\rightarrow T_{x}M$, $\pi_P:P\rightarrow M$ is the projection.

The $\mathfrak{so}(4)$-valued Levi-Civita connection form and curvature form are denoted by
$\omega=(\omega_j^i)$ and $\Omega=(\Omega_j^i)$, respectively. Then the structure equations of $M$ are\cite{KN}
\begin{eqnarray}
&&d\theta^i=-\omega_j^i\wedge\theta^j,\\
&&d\omega_j^i=-\omega_k^i\wedge\omega_j^k+\Omega_j^i,
\end{eqnarray}
where $\Omega_j^i=\frac{1}{2}R_{ijkl}\theta^k\wedge\theta^l$,
$R_{ijkl}$ are functions on $P$ defining the Riemannian curvature tensor of $M$.

The canonical form on $Q$ is also denoted by $\theta$, which is the restriction of the canonical form of $P$ to $Q$. The Lie algebra decomposition $\mathfrak{so}(4)=\mathfrak{u}(2)\oplus\mathfrak{m}$ induces a decomposition of $\omega$:
$$\omega|_Q=\omega_{\mathfrak{u}(2)}\oplus\omega_\mathfrak{m}.$$
Set $\phi=\omega_{\mathfrak{u}(2)}$, in fact, $\phi$ defines a connection on the principal bundle $Q$, which is the Lichnerowicz connection.

Set
$$\varphi^1=\frac{1}{\sqrt{2}}(\theta^1+\sqrt{-1}\theta^2), \varphi^2=\frac{1}{\sqrt{2}}(\theta^3+\sqrt{-1}\theta^4).$$
Then the structure equations on $Q$ for the connection $\phi=(\phi_b^a)$ are
\begin{equation}
d\varphi^1=-\phi_1^1\wedge\varphi^1-\phi_2^1\wedge\varphi^2+\tau^1,
\end{equation}
\begin{equation}
d\varphi^2=-\phi_1^2\wedge\varphi^1-\phi_2^2\wedge\varphi^2+\tau^2,
\end{equation}
where
\begin{eqnarray*}
&&\phi_1^1=-\sqrt{-1}\omega_2^1,\\
&&\phi_2^2=-\sqrt{-1}\omega_4^3,\\
&&\phi_2^1=\frac{1}{2}[(\omega_3^1+\omega_4^2)+\sqrt{-1}(\omega_3^2-\omega_4^1)],\\
&&\tau^1=\frac{1}{2}[(\omega_4^2-\omega_3^1)-\sqrt{-1}(\omega_3^2+\omega_4^1)]\wedge \overline{\varphi^2},\\
&&\tau^2=\frac{1}{2}[(\omega_3^1-\omega_4^2)+\sqrt{-1}(\omega_3^2+\omega_4^1)]\wedge \overline{\varphi^1}.
\end{eqnarray*}
Set $\mu=\frac{1}{2}[(\omega_4^2-\omega_3^1)-\sqrt{-1}(\omega_3^2+\omega_4^1)]$. Then $\tau^1=\mu\wedge \overline{\varphi^2}$, $\tau^2=-\mu\wedge \overline{\varphi^1}$.

As in section 2, set $\varphi^3=\phi_2^1$, we can define four natural almost complex structures, denoted by $\mathbb{J}_i^L$, on $Z=\mathbb{P}(T^{1,0}M)$ as follows.

~~~~~~~~~~~~~~~~~~~~~~~~~~~~~~~$\mathbb{J}_1^L$:~~a basis of $(1, 0)$-forms is $\{\varphi^1, \overline{\varphi^2}, \varphi^3\}$;

~~~~~~~~~~~~~~~~~~~~~~~~~~~~~~~$\mathbb{J}_2^L$:~~a basis of $(1, 0)$-forms is $\{\varphi^1, \overline{\varphi^2}, \overline{\varphi^3}\}$;

~~~~~~~~~~~~~~~~~~~~~~~~~~~~~~~$\mathbb{J}_3^L$:~~a basis of $(1, 0)$-forms is $\{\varphi^1, \varphi^2, \varphi^3\}$;

~~~~~~~~~~~~~~~~~~~~~~~~~~~~~~~$\mathbb{J}_4^L$:~~a basis of $(1, 0)$-forms is $\{\varphi^1, \varphi^2, \overline{\varphi^3}\}$.

\noindent A natural family of $\mathbb{J}_i^L$-compatible Riemannian metrics, denoted by $h_\lambda^L$, on $Z=\mathbb{P}(T^{1,0}M)$ is
\begin{equation}
h_\lambda^L=u^\ast(\varphi^1\cdot\overline{\varphi^1}+\varphi^2\cdot\overline{\varphi^2}+\lambda^2\varphi^3\cdot\overline{\varphi^3}),
\end{equation}
where parameter $\lambda>0$, $u$ is a local section of the fibration $\pi_1: Q\rightarrow Z$. The associated fundamental 2-forms are denoted by $\mathbb{K}_i^L(\lambda)$, with respect to $\mathbb{J}_i^L$.

From the structure equation (3.2), we have
\begin{equation}
d\varphi^3=\sqrt{-1}(\omega_2^1-\omega_4^3)\wedge \varphi^3+\frac{1}{2}[(\Omega_3^1+\Omega_4^2)+\sqrt{-1}(\Omega_3^2-\Omega_4^1)],
\end{equation}
where
\begin{align}\label{E:1}
\frac{1}{2}[(\Omega_3^1+\Omega_4^2)+\sqrt{-1}(\Omega_3^2-\Omega_4^1)]
                      &=\mathbf{R}_{\bar{1}212}\varphi^1\wedge\varphi^2+\mathbf{R}_{\bar{1}2\bar{1}\bar{2}}\overline{\varphi^1}\wedge\overline{\varphi^2}
                      +\mathbf{R}_{\bar{1}21\bar{1}}\varphi^1\wedge\overline{\varphi^1}\notag\\
                      &~~~~+\mathbf{R}_{\bar{1}22\bar{2}}\varphi^2\wedge\overline{\varphi^2}+\mathbf{R}_{\bar{1}21\bar{2}}\varphi^1\wedge\overline{\varphi^2}
                      +\mathbf{R}_{\bar{1}2\bar{1}2}\overline{\varphi^1}\wedge\varphi^2,
\end{align}
where, for example
\begin{eqnarray*}
&&\mathbf{R}_{\bar{1}212}=R(\overline{u_1}, u_2, u_1, u_2)\\
&&~~~~~~~~~=\frac{1}{4}R(e_1+\sqrt{-1}e_2,e_3-\sqrt{-1}e_4,e_1-\sqrt{-1}e_2,e_3-\sqrt{-1}e_4)\\
&&~~~~~~~~~=\frac{1}{4}[(R_{1313}-R_{2424}+R_{2323}-R_{1414})-2\sqrt{-1}(R_{1314}+R_{2324})],
\end{eqnarray*}
where $(u_1, u_2)=\frac{1}{\sqrt{2}}(e_1-\sqrt{-1}e_2,e_3-\sqrt{-1}e_4)\in Q$.  Others are similar.

\begin{prop}
Let $(M, J, h)$ be a Hermitian surface. For the Lichnerowicz connection  on $Q=U(M)$, the almost complex structures $\mathbb{J}_1^L$, $\mathbb{J}_2^L$, $\mathbb{J}_3^L$ and $\mathbb{J}_4^L$ on the twistor space $Z=\mathbb{P}(T^{1,0}M)$ have the following properties:
\begin{enumerate}[\upshape (i)]
\item $\mathbb{J}_1^L$ is integrable if and only if $(M, J, h)$ is self-dual;
\item $\mathbb{J}_2^L$ is not integrable;
\item $\mathbb{J}_3^L$ (or $\mathbb{J}_4^L$) is integrable if and only if the Ricci tensor of the Levi-Civita connection on  $(M, J, h)$ is $J$-invariant.
\end{enumerate}
\end{prop}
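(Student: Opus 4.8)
The plan is to apply the Newlander--Nirenberg criterion in coframe form: an almost complex structure presented by a coframe of $(1,0)$-forms is integrable if and only if the exterior derivative of each $(1,0)$-form has vanishing $(0,2)$-component, equivalently the $(0,1)$-forms span a differential ideal. Concretely I would compute $d\varphi^1$, $d\varphi^2$, $d\varphi^3$ from the structure equations (3.1), (3.2), (3.5), (3.6) and then, for each $\mathbb{J}_i^L$, read off the $(0,2)$-part relative to the corresponding splitting. Two structural inputs make this mechanical. First, since $\phi$ is $\mathfrak{u}(2)$-valued (skew-Hermitian), $\phi_1^2=-\overline{\phi_2^1}=-\overline{\varphi^3}$, so the non-torsion terms in (3.1)--(3.2) are explicit wedges involving $\varphi^1,\varphi^2,\varphi^3$ or $\overline{\varphi^3}$ whose types are immediate once a splitting is fixed. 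Second, $\mu$ is built from the $\mathfrak{m}$-part $\omega-\phi$ of the Levi--Civita connection, which is a tensorial (horizontal) form; hence $\mu$ is a combination of $\varphi^1,\varphi^2,\overline{\varphi^1},\overline{\varphi^2}$ only, with no $\varphi^3,\overline{\varphi^3}$ terms.

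With this, parts (i) and (ii) are nearly immediate. For $\mathbb{J}_1^L$, where $\{\varphi^1,\overline{\varphi^2},\varphi^3\}$ are $(1,0)$, I expect that $d\varphi^1$ and $d\overline{\varphi^2}$ have no $(0,2)$-component at all (each term is a wedge with one of $\varphi^1,\overline{\varphi^2},\varphi^3$), so the only obstruction comes from $d\varphi^3$, whose $(0,2)$-part is by (3.6) exactly $\mathbf{R}_{\bar 1 2\bar 1 2}\,\overline{\varphi^1}\wedge\varphi^2$. Thus $\mathbb{J}_1^L$ is integrable iff $\mathbf{R}_{\bar 1 2\bar 1 2}\equiv 0$ on $Z$. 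To interpret this I would use (2.12)--(2.13) to see $\overline{u_1}\wedge u_2=\tfrac{1}{\sqrt2}(\epsilon_2^--\sqrt{-1}\,\epsilon_3^-)$ is an isotropic anti-self-dual bivector, so $\mathbf{R}_{\bar 1 2\bar 1 2}=\langle(W^-+\tfrac{s}{12}\mathrm{Id})\,\xi,\xi\rangle=\langle W^-\xi,\xi\rangle$ with $\xi=\overline{u_1}\wedge u_2$ (the scalar part drops because $\xi$ is isotropic). Running the unitary frame over the $\mathbb{P}^1$-fibre sweeps $\epsilon_1^-$ over the whole sphere in $\wedge^-$, so the condition says that the restriction of $W^-$ to $(\epsilon_1^-)^\perp$ is scalar for every $\epsilon_1^-$, which since $W^-$ is trace-free forces $W^-=0$, i.e. self-duality. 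For $\mathbb{J}_2^L$ the $(0,1)$-forms are $\{\overline{\varphi^1},\varphi^2,\varphi^3\}$, and the term $-\phi_2^1\wedge\varphi^2=-\varphi^3\wedge\varphi^2$ in $d\varphi^1$ is a nonzero $(0,2)$-form that no other term can cancel; hence $\mathbb{J}_2^L$ is never integrable.

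For part (iii), with $\{\varphi^1,\varphi^2,\varphi^3\}$ the $(1,0)$-forms of $\mathbb{J}_3^L$, the $(0,2)$-parts of $d\varphi^1$ and $d\varphi^2$ are $\mu^{(0,1)}\wedge\overline{\varphi^2}$ and $-\mu^{(0,1)}\wedge\overline{\varphi^1}$; because $(M,J)$ is a complex surface the Nijenhuis tensor vanishes, so the $(0,2)$-torsion of the Hermitian Lichnerowicz connection vanishes, forcing $\mu^{(0,1)}=0$ and making these conditions vacuous. The remaining obstruction is the $(0,2)$-part of $d\varphi^3$, namely $\mathbf{R}_{\bar 1 2\bar 1\bar 2}\,\overline{\varphi^1}\wedge\overline{\varphi^2}$ from (3.6), so $\mathbb{J}_3^L$ is integrable iff $\mathbf{R}_{\bar 1 2\bar 1\bar 2}\equiv 0$. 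Using (2.11) I would identify $\overline{u_1}\wedge\overline{u_2}=\tfrac{1}{\sqrt2}(\epsilon_2^++\sqrt{-1}\,\epsilon_3^+)$ as a self-dual bivector spanning the $[[\wedge^{2,0}]]$-direction; then by the curvature decomposition (2.2), $\mathbf{R}_{\bar 1 2\bar 1\bar 2}=\langle \mathrm{Ric}_0(\overline{u_1}\wedge\overline{u_2}),\,\overline{u_1}\wedge u_2\rangle$ is precisely a matrix element of the off-diagonal trace-free Ricci block between $[[\wedge^{2,0}]]$ and $\wedge^-$. Since the $\wedge^{0,2}$-direction is fixed along the fibre while $\overline{u_1}\wedge u_2$ again sweeps all isotropic anti-self-dual directions, vanishing over the whole fibre is equivalent to vanishing of the entire $[[\wedge^{2,0}]]\to\wedge^-$ coupling of $\mathrm{Ric}_0$, i.e. to the vanishing of the $J$-anti-invariant part of the Ricci tensor, i.e. to $J$-invariance of the Ricci tensor. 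The structure $\mathbb{J}_4^L$ is treated identically with $\varphi^3$ replaced by $\overline{\varphi^3}$; the relevant component is then $\mathbf{R}_{\bar 1 2 1 2}$, which lies in the same off-diagonal block and yields the same condition.

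The bookkeeping of $(0,2)$-components is routine, reduced to inspection by the two structural observations above (the $\mathfrak{u}(2)$-type relations and the horizontality of $\mu$). The real content, and the step I expect to be the main obstacle, is the translation of the pointwise scalar conditions $\mathbf{R}_{\bar 1 2\bar 1 2}\equiv 0$ and $\mathbf{R}_{\bar 1 2\bar 1\bar 2}\equiv 0$ into the tensorial statements $W^-=0$ and ``$\mathrm{Ric}$ is $J$-invariant''. This requires, first, correctly placing $\overline{u_1}\wedge u_2$ and $\overline{u_1}\wedge\overline{u_2}$ in $\wedge^-$ and $\wedge^+$ via (2.11)--(2.15), so as to recognise which block of (2.2) each component occupies; and second, the fibre-sweeping argument that upgrades a single vanishing matrix element, imposed over all points of the $\mathbb{P}^1$-fibre, to the vanishing of a full irreducible curvature component, where one must verify that the bivectors produced along the fibre genuinely span the relevant subspace so that no curvature is left undetected.
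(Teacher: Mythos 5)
Your proposal is correct, and its first half coincides exactly with the paper's reduction: from the structure equations one reads off that $\mathbb{J}_1^L$ is integrable iff $\mathbf{R}_{\bar{1}2\bar{1}2}=0$ on $Q$, that $\mathbb{J}_2^L$ is killed by the uncancellable $(0,2)$-term $-\varphi^3\wedge\varphi^2$ in $d\varphi^1$, and that $\mathbb{J}_3^L$ (resp. $\mathbb{J}_4^L$) is integrable iff $\mathbf{R}_{\bar{1}2\bar{1}\bar{2}}=0$ (resp. $\mathbf{R}_{\bar{1}212}=0$) on $Q$, the torsion terms being harmless because integrability of $J$ forces $\mu^{(0,1)}=0$ (a point the paper leaves implicit in the phrase ``since $(M,J)$ is a complex surface'' and which you justify correctly via the Nijenhuis tensor). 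Where you genuinely diverge is in converting these pointwise conditions on $Q$ into tensorial statements. The paper does this computationally: for (i) it acts by $U(2)$ on a fixed frame, expands $R(\overline{\hat{u}_1},\hat{u}_2,\overline{\hat{u}_1},\hat{u}_2)=0$ to extract the supplementary identities (3.9)--(3.10), and then invokes the definition of $W^-$; for (iii) it rewrites $\mathbf{R}_{\bar{1}2\bar{1}\bar{2}}$ in real components and recognizes $Ric(e_1,e_1)=Ric(e_2,e_2)$, $Ric(e_1,e_2)=0$ over all adapted frames as $J$-invariance. You instead place the coefficients inside the block decomposition (2.2): $\mathbf{R}_{\bar{1}2\bar{1}2}=\langle W^-\xi,\xi\rangle$ for the isotropic anti-self-dual bivector $\xi=\overline{u_1}\wedge u_2$ (the scalar block dropping by isotropy, the Ricci block by orthogonality of $\wedge^\pm$), and $\mathbf{R}_{\bar{1}2\bar{1}\bar{2}}$ as the $\mathrm{Ric}_0$-pairing of the fibrewise-fixed $\wedge^{0,2}$-direction $\overline{u_1}\wedge\overline{u_2}$ against $\xi$; the fibre sweep then makes $\epsilon_1^-$ run over the whole sphere in $\wedge^-$ (forcing $W^-$ to be scalar on every $2$-plane, hence zero by trace-freeness) and makes $\xi$ run over a spanning set of the isotropic cone of $\wedge^-\otimes\mathbb{C}$ (forcing $\mathrm{Ric}_0$ to annihilate $[[\wedge^{2,0}]]$). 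Both routes are sound; yours buys conceptual transparency --- it explains a priori why $W^-$ governs (i) and the off-diagonal Ricci block governs (iii), and the sweeping argument is reusable --- at the price of invoking the standard identification of the $J$-anti-invariant part of the Ricci tensor with the $[[\wedge^{2,0}]]\to\wedge^-$ component of $\mathrm{Ric}_0$ (a fact the paper effectively re-proves by hand, and which in a fully written version you should verify or attribute, e.g.\ to the Apostolov--Gauduchon reference [AG]).
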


\begin{proof}
It is well-known that $\mathbb{J}_i^L$ is integrable if and only if 
$$d\Omega_{\mathbb{J}_i^L}^{1,0}(Z)\subseteq \Omega_{\mathbb{J}_i^L}^{2,0}(Z)\oplus\Omega_{\mathbb{J}_i^L}^{1,1}(Z).$$ 
We check this by using the structure equations (3.3), (3.4) and (3.6).

(i) Together with (3.7), we can see that $\mathbb{J}_1^L$ is integrable if and only if $\mathbf{R}_{\bar{1}2\bar{1}2}=0$ on $Q$. Now, we claim that
\begin{equation}
\mathbf{R}_{\bar{1}2\bar{1}2}=0
\end{equation}
on $Q$ if and only if $(M, J, h)$ is self-dual.

Fix an unitary frame $(u_1, u_2)\in Q$. Then any unitary frame $(\hat{u}_1, \hat{u}_2)$ can be written as
$$(\hat{u}_1, \hat{u}_2)=(u_1, u_2)\mathfrak{a},$$
where $\mathfrak{a}=\left(\begin{array}{cc}a_1&a_2\\
a_3&a_4\end{array}\right) \in U(2)$. If $R(\overline{\hat{u}_1},\hat{u}_2, \overline{\hat{u}_1}, \hat{u}_2)=0$, by direct calculations, we have
\begin{eqnarray*}
&&(a_1\overline{a_2})^2(\mathbf{R}_{1\bar{1}1\bar{1}}+\mathbf{R}_{2\bar{2}2\bar{2}}-2\mathbf{R}_{1\bar{1}2\bar{2}}+2\mathbf{R}_{1\bar{2}\bar{1}2})\\
&&+2a_1^2\overline{a_2}\overline{a_4}(\mathbf{R}_{1\bar{1}1\bar{2}}-\mathbf{R}_{1\bar{2}2\bar{2}})
-2\overline{a_2}^2a_1a_3(\mathbf{R}_{1\bar{1}\bar{1}2}-\mathbf{R}_{\bar{1}22\bar{2}})=0.
\end{eqnarray*}
Thus, if $\mathbf{R}_{\bar{1}2\bar{1}2}=0$ on $Q$, then we also have 
\begin{equation}
\mathbf{R}_{1\bar{1}1\bar{2}}-\mathbf{R}_{1\bar{2}2\bar{2}}=0,
\end{equation}
\begin{equation}
\mathbf{R}_{1\bar{1}1\bar{1}}+\mathbf{R}_{2\bar{2}2\bar{2}}-2\mathbf{R}_{1\bar{1}2\bar{2}}+2\mathbf{R}_{1\bar{2}\bar{1}2}=0,
\end{equation}
on $Q$.

By the definition of anti-self-dual Weyl curvature operator $W^-$, it follows that
the equations (3.8), (3.9) and (3.10) on $Q$ are equivalent to $W^-=0$, i.e. $(M, J, h)$ is self-dual.

(ii) From the structure equations of the Lichnerowicz connection on $Q$, it is obvious that $\mathbb{J}_2^L$ is not integrable.

(iii) Since $(M,J)$ is a complex surface, then $\mathbb{J}_3^L$ is integrable if and only if $\mathbf{R}_{\bar{1}2\bar{1}\bar{2}}=0$ on $Q$; $\mathbb{J}_4^L$ is integrable if and only if $\mathbf{R}_{\bar{1}212}=0$ on $Q$. In fact, $\mathbf{R}_{\bar{1}212}=0$ on $Q$ is equivalent to
$\mathbf{R}_{\bar{1}2\bar{1}\bar{2}}=0$ on $Q$.

Now, we write the integrability condition $\mathbf{R}_{\bar{1}2\bar{1}\bar{2}}=0$ in a real version. For
\begin{eqnarray*}
&&\mathbf{R}_{\bar{1}2\bar{1}\bar{2}}=R(\overline{u_1}, u_2, \overline{u_1}, \overline{u_2})\\
&&~~~~~~~~~=\frac{1}{4}R(e_1+\sqrt{-1}e_2,e_3-\sqrt{-1}e_4,e_1+\sqrt{-1}e_2,e_3+\sqrt{-1}e_4)\\
&&~~~~~~~~~=\frac{1}{4}[(R_{1313}-R_{2424}+R_{1414}-R_{2323})+2\sqrt{-1}(R_{1323}+R_{1424})],
\end{eqnarray*}
it follows that $\mathbf{R}_{\bar{1}2\bar{1}\bar{2}}=0$ is equivalent to
$$R_{1313}-R_{2424}+R_{1414}-R_{2323}=0,~~R_{1323}+R_{1424}=0,$$
i.e. the Ricci tensor of the Levi-Civita connection on  $(M, J, h)$ satisfies
$$Ric(e_1,e_1)=Ric(e_2,e_2),~~Ric(e_1,e_2)=0.$$
Therefore, the integrability condition $\mathbf{R}_{\bar{1}2\bar{1}\bar{2}}=0$ on $Q$ is equivalent to the Ricci tensor of the Levi-Civita connection on  $(M, J, h)$ is $J$-invariant.

\end{proof}

\begin{rem} In fact, from the structure equations of any Hermitian connection on  $Q=U(M)$, the second almost complex structure $\mathbb{J}_2$ is never integrable. I. Vaisman\cite{Va2} showed that the almost complex structures $\mathbb{J}_3$ and $\mathbb{J}_4$ are integrable or not simultaneously, for any Hermitian  connection. The condition (iii) in the above proposition appeared in the Riemannian Goldberg-Sachs theorem, which was studied by V. Apostolov and P. Gauduchon\cite{AG}. It is easy to see that the condition (iii) is satisfied for a Hermitian Einstein surface or a K\"{a}hler surface.
\end{rem}

It is a natural question to see how the almost complex structures $\mathbb{J}_i^L$ on $Z$ depend on the choice of metric $h$ in the conformal class $[h]$. For
a Hermitian surface $(M, J, \tilde{h}=e^{2f}h)$ with a conformally related metric $\tilde{h}$, let $\tilde{\pi}: \tilde{Q}\rightarrow M$  be the $U(2)$-principal
bundle of $\tilde{h}$-unitary frames on $M$. The corresponding canonical form,  Lichnerowicz connection form and torsion form  on $\tilde{Q}$ are denoted by $\tilde{\varphi}=(\tilde{\varphi}^1, \tilde{\varphi}^2)^t$, $\tilde{\phi}=(\tilde{\phi}_b^a)$, $\tilde{\tau}=(\tilde{\tau}^1, \tilde{\tau}^2)^t$, respectively. Then
$$d\tilde{\varphi}^a=-\tilde{\phi}_b^a\wedge\tilde{\varphi}^b+\tilde{\tau}^a.$$
We have a bundle isomorphism
$$\mathfrak{f}: \tilde{Q}\rightarrow Q,~~~~\mathfrak{f}(x, \tilde{u}):=(x, e^f\tilde{u}).$$
Thus $\mathfrak{f}^\ast\varphi=e^{-f}\tilde{\varphi}$. Set $df=f_b\tilde{\varphi}^b+\overline{f_b\tilde{\varphi}^b}$. Then by the structure equations of $\phi$ and $\tilde{\phi}$, we obtain
\begin{equation}
\mathfrak{f}^\ast\phi_b^a=\tilde{\phi}_b^a+\overline{f_a\tilde{\varphi}^b}-f_b\tilde{\varphi}^a.
\end{equation}
In particular, $\mathfrak{f}^\ast\phi_2^1=\tilde{\phi}_2^1+\overline{f_1\tilde{\varphi}^2}-f_2\tilde{\varphi}^1$. From the constructions of almost complex structures $\mathbb{J}_i^L$ on $Z$, and the following commutative diagram,
$$\xymatrix{
  \tilde{Q} \ar[rr]^{\mathfrak{f}} \ar[dr]_{\tilde{\pi}_1}
                &  &    Q\ar[dl]^{\pi_1}    \\
                & Z                },
$$
we obtain
\begin{prop}
$\mathbb{J}_1^L$ is conformally invariant; $\mathbb{J}_2^L$, $\mathbb{J}_3^L$ and $\mathbb{J}_4^L$ are conformally invariant only under change of scale.
\end{prop}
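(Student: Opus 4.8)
The plan is to use the bundle isomorphism $\mathfrak{f}:\tilde{Q}\to Q$, which by the commutative diagram covers the identity on $Z$. Writing $\tilde{\mathbb{J}}_i^L$ for the almost complex structure built from the conformally rescaled metric $\tilde{h}=e^{2f}h$, conformal invariance of $\mathbb{J}_i^L$ is exactly the assertion that $\mathfrak{f}^\ast$ carries the $(1,0)$-coframe defining $\mathbb{J}_i^L$ into the $(1,0)$-span of the coframe defining $\tilde{\mathbb{J}}_i^L$; since both $(1,0)$-spaces have complex rank $3$, it suffices to verify this inclusion. First I would record the three transformation laws driving the computation, namely $\mathfrak{f}^\ast\varphi^1=e^{-f}\tilde{\varphi}^1$, $\mathfrak{f}^\ast\varphi^2=e^{-f}\tilde{\varphi}^2$, and, from (3.11) with $\varphi^3=\phi_2^1$,
\[
\mathfrak{f}^\ast\varphi^3=\tilde{\varphi}^3-f_2\tilde{\varphi}^1+\overline{f_1}\,\overline{\tilde{\varphi}^2},
\]
together with their conjugates. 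The conformal factors $e^{-f}$ multiplying $\varphi^1,\varphi^2$ are harmless in every case, so the entire question is governed by the inhomogeneous terms $-f_2\tilde{\varphi}^1+\overline{f_1}\,\overline{\tilde{\varphi}^2}$ in $\mathfrak{f}^\ast\varphi^3$.

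For $\mathbb{J}_1^L$, whose $(1,0)$-coframe is $\{\varphi^1,\overline{\varphi^2},\varphi^3\}$, the decisive observation is that for $\tilde{\mathbb{J}}_1^L$ each of $\tilde{\varphi}^1$, $\overline{\tilde{\varphi}^2}$ and $\tilde{\varphi}^3$ is already of type $(1,0)$. Hence every term on the right-hand side of the displayed formula is of type $(1,0)$ for $\tilde{\mathbb{J}}_1^L$, and $\mathfrak{f}^\ast\varphi^1,\mathfrak{f}^\ast\overline{\varphi^2}$ are nonzero multiples of $\tilde{\varphi}^1,\overline{\tilde{\varphi}^2}$. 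The pulled-back $(1,0)$-space is therefore contained in, hence equal to, the $(1,0)$-space of $\tilde{\mathbb{J}}_1^L$, so $\mathbb{J}_1^L$ is conformally invariant for arbitrary $f$.

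For $\mathbb{J}_2^L,\mathbb{J}_3^L,\mathbb{J}_4^L$ I would run the same decomposition and isolate the obstruction. For $\mathbb{J}_3^L$ (coframe $\{\varphi^1,\varphi^2,\varphi^3\}$) the term $\overline{f_1}\,\overline{\tilde{\varphi}^2}$ in $\mathfrak{f}^\ast\varphi^3$ is of type $(0,1)$ for $\tilde{\mathbb{J}}_3^L$, whereas $\tilde{\varphi}^1$ and $\tilde{\varphi}^3$ are $(1,0)$; for $\mathbb{J}_4^L$ the conjugate computation produces the analogous $(0,1)$ term; and for $\mathbb{J}_2^L$ (coframe $\{\varphi^1,\overline{\varphi^2},\overline{\varphi^3}\}$) the form $\mathfrak{f}^\ast\overline{\varphi^3}$ acquires both $-\overline{f_2}\,\overline{\tilde{\varphi}^1}$ and $f_1\tilde{\varphi}^2$, each of type $(0,1)$ for $\tilde{\mathbb{J}}_2^L$. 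In each case the pulled-back coframe remains $(1,0)$ precisely when the relevant coefficients of $\partial f$ vanish.

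The crux is converting these coefficient conditions into a statement about the conformal factor on $M$. The $f_a$ are the components of $\partial f$ in the unitary coframe $(\tilde{\varphi}^1,\tilde{\varphi}^2)$ and are frame-dependent: under a $U(2)$-rotation they transform as the components of a vector. A point of $Z=\mathbb{P}(T^{1,0}M)$ only fixes the line spanned by $\tilde{u}_1$, so as the point ranges over the fibre of $\pi_2:Z\to M$ the direction picked out by $\tilde{\varphi}^1$ sweeps all of $T^{1,0}_xM$; requiring the offending coefficients to vanish identically on $Z$ therefore forces $\partial f=0$ at every point of $M$, and since $f$ is real this gives $df=0$, i.e. $f$ is locally constant. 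Conversely, if $f$ is constant then $df=0$, all inhomogeneous terms disappear, and every $\mathbb{J}_i^L$ is preserved. Hence $\mathbb{J}_2^L,\mathbb{J}_3^L,\mathbb{J}_4^L$ are invariant only under change of scale, which completes the argument.
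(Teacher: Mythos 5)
Your proof is correct and follows essentially the same route as the paper: the bundle isomorphism $\mathfrak{f}:\tilde{Q}\to Q$, the transformation law (3.11) specialized to $\mathfrak{f}^\ast\phi_2^1=\tilde{\phi}_2^1+\overline{f_1\tilde{\varphi}^2}-f_2\tilde{\varphi}^1$, and the reading-off of the type of the inhomogeneous terms with respect to each $\tilde{\mathbb{J}}_i^L$. You in fact make explicit a step the paper leaves implicit, namely the fibre-sweeping argument showing that vanishing of the offending coefficients of $\partial f$ at every point of $Z$ forces $df=0$, which is precisely the justification of the phrase ``only under change of scale.''
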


Next, we study some metric properties of $(Z=\mathbb{P}(T^{1,0}M), \mathbb{J}_i^L, \mathbb{K}_i^L(\lambda))$.

\begin{thm} Let $(M, J, h)$ be a Hermitian surface. For the Lichnerowicz connection  on $Q=U(M)$, the almost Hermitian structures $(\mathbb{J}_i^L, \mathbb{K}_i^L(\lambda))$ on the twistor space $Z=\mathbb{P}(T^{1,0}M)$ have the following properties:
\begin{enumerate}[\upshape (i)]
\item $d\mathbb{K}_1^L(\lambda)=0$ if and only if $(M, J, h)$ is self-dual and Einstein with scalar curvature $s=\frac{24}{\lambda^2}$;
\item $d\mathbb{K}_2^L(\lambda)=0$ if and only if $(M, J, h)$ is self-dual and Einstein with scalar curvature $s=-\frac{24}{\lambda^2}$;
\item $d\mathbb{K}_3^L(\lambda)=0$ (or equivalently, $d\mathbb{K}_4^L(\lambda)=0$) if and only if $(M, J, h)$ is a flat K\"{a}hler surface.
\end{enumerate}
\end{thm}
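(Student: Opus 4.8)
The plan is to compute each $d\mathbb{K}_i^L(\lambda)$ directly from the structure equations (3.3), (3.4), (3.6) together with their conjugates, and to read off when it vanishes. Three facts streamline this. Since $\phi$ is $\mathfrak{u}(2)$-valued, $\phi_1^1$ and $\phi_2^2$ are purely imaginary and $\phi_1^2=-\overline{\phi_2^1}=-\overline{\varphi^3}$. The torsion form $\mu$ is semibasic over $M$ (it is the $\mathfrak{m}$-part of the Levi-Civita connection, i.e.\ a tensorial form), hence a combination of $\varphi^1,\varphi^2,\overline{\varphi^1},\overline{\varphi^2}$ only. Finally, the curvature enters solely through $d\varphi^3$, and I abbreviate the right-hand side of (3.7) by $\Theta$. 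Substituting, the diagonal forms $\phi_1^1,\phi_2^2$ drop out of every $d\mathbb{K}_i^L(\lambda)$ --- as they must, $\mathbb{K}_i^L(\lambda)$ being globally defined on $Z$ --- and $d\mathbb{K}_i^L(\lambda)$ separates into a torsion part (a purely base $3$-form built from $\mu,\overline{\mu}$) and a curvature part (the $3$-forms carrying exactly one $\varphi^3$ or $\overline{\varphi^3}$).

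The decisive feature is the sign of the middle term: $\mathbb{K}_3^L(\lambda),\mathbb{K}_4^L(\lambda)$ carry $+\varphi^2\wedge\overline{\varphi^2}$, whereas $\mathbb{K}_1^L(\lambda),\mathbb{K}_2^L(\lambda)$ carry $\overline{\varphi^2}\wedge\varphi^2$. For $\mathbb{K}_3^L(\lambda)$ the $\varphi^3$-terms produced by $-\phi_2^1\wedge\varphi^2$ in (3.3) and by $-\phi_1^2\wedge\varphi^1$ in (3.4) cancel, leaving
\[ d\mathbb{K}_3^L(\lambda)=\sqrt{-1}\big(-2\mu\wedge\overline{\varphi^1}\wedge\overline{\varphi^2}+2\varphi^1\wedge\varphi^2\wedge\overline{\mu}\big)+\sqrt{-1}\lambda^2\big(\Theta\wedge\overline{\varphi^3}-\varphi^3\wedge\overline{\Theta}\big), \]
where the first summand is the pull-back of $dF$ and involves only the $(1,0)$-part of $\mu$. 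Since the two summands occupy disjoint sets of basis $3$-forms, $d\mathbb{K}_3^L(\lambda)=0$ is equivalent to $dF=0$ together with $\Theta=0$ on $Q$; the computation for $\mathbb{K}_4^L(\lambda)$ differs only by the sign of the $\lambda^2$-term and yields the same two conditions, which proves $d\mathbb{K}_3^L(\lambda)=0\Leftrightarrow d\mathbb{K}_4^L(\lambda)=0$. For $\mathbb{K}_1^L(\lambda)$ the opposite occurs: the torsion part cancels while the two $\varphi^3$-terms reinforce, so $d\mathbb{K}_1^L(\lambda)$ becomes a pure curvature expression. Collecting the coefficients of $\varphi^3\wedge\varphi^1\wedge\varphi^2$, $\varphi^3\wedge\overline{\varphi^1}\wedge\overline{\varphi^2}$, $\varphi^3\wedge\varphi^1\wedge\overline{\varphi^1}$, $\varphi^3\wedge\varphi^2\wedge\overline{\varphi^2}$, $\varphi^3\wedge\varphi^1\wedge\overline{\varphi^2}$ and $\varphi^3\wedge\overline{\varphi^1}\wedge\varphi^2$ (the $\overline{\varphi^3}$-terms giving the conjugate system) produces
\[ \mathbf{R}_{\bar{1}212}=\mathbf{R}_{\bar{1}2\bar{1}\bar{2}}=\mathbf{R}_{\bar{1}21\bar{1}}=\mathbf{R}_{\bar{1}22\bar{2}}=\mathbf{R}_{\bar{1}2\bar{1}2}=0,\qquad \mathbf{R}_{\bar{1}21\bar{2}}=\tfrac{2}{\lambda^2}\quad\text{on }Q. \]
For $\mathbb{K}_2^L(\lambda)$ the $\lambda^2$-sign flips the last equation to $\mathbf{R}_{\bar{1}21\bar{2}}=-\tfrac{2}{\lambda^2}$.

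It then remains to translate these frame-bundle identities into tensorial conditions on $(M,J,h)$, exactly in the spirit of the proof of Proposition 3.1. There $\mathbf{R}_{\bar{1}2\bar{1}2}=0$ on $Q$ is self-duality $W^-=0$; the vanishing of $\mathbf{R}_{\bar{1}212},\mathbf{R}_{\bar{1}21\bar{1}},\mathbf{R}_{\bar{1}22\bar{2}}$ on $Q$ is the vanishing of the trace-free Ricci tensor $\text{Ric}_0$, i.e.\ the Einstein condition (which forces $\mathbf{R}_{\bar{1}2\bar{1}\bar{2}}=0$ as well, the Ricci tensor then being $J$-invariant). On the self-dual Einstein locus one has $\hat R|_{\wedge^-}=\tfrac{s}{12}\,\text{Id}$ from (2.2); writing $u_1\wedge u_2$-type wedges in the $\epsilon^-$-basis one checks that $A\wedge B$ and $C\wedge D$ (with $A=\sqrt{2}\,\overline{u_1}$ etc.) are anti-self-dual, giving the real value $\mathbf{R}_{\bar{1}21\bar{2}}=\tfrac{s}{12}$. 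Hence $\mathbf{R}_{\bar{1}21\bar{2}}=\pm\tfrac{2}{\lambda^2}$ reads $s=\pm\tfrac{24}{\lambda^2}$, proving (i) and (ii). For (iii), $\Theta=0$ on $Q$ means all six components in (3.7) vanish, i.e.\ self-dual, Einstein and $s=0$; combined with $dF=0$ (so $(M,J,h)$ is K\"ahler), self-duality and $s=0$ force $W^+=0$ as well (for a K\"ahler surface $W^+$ is determined by $s$), while Einstein with $s=0$ gives $\text{Ric}=0$, whence $R\equiv0$ and $M$ is a flat K\"ahler surface. The converses are immediate upon substituting each curvature normal form back into $d\mathbb{K}_i^L(\lambda)$.

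I expect the main obstacle to be precisely this last step: the $U(2)$-equivariant bookkeeping that upgrades identities valid on \emph{all} of $Q$ to tensorial statements on $M$. In particular one must verify that the constant in $\mathbf{R}_{\bar{1}21\bar{2}}=\tfrac{s}{12}$ is exactly $\tfrac{1}{12}$ (this is what produces the factor $24$), confirm the reality of $\mathbf{R}_{\bar{1}21\bar{2}}$ via the curvature symmetries, and check that requiring the diagonal components $\mathbf{R}_{\bar{1}21\bar{1}},\mathbf{R}_{\bar{1}22\bar{2}}$ to vanish in every unitary frame is genuinely equivalent to $\text{Ric}_0=0$ rather than to a strictly weaker condition. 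The differential-form computations of the first two paragraphs are routine once the semibasic nature of $\mu$ and the cancellation of $\phi_1^1,\phi_2^2$ are recorded.
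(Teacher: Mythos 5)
Your computation matches the paper's almost line for line in parts (i) and (ii): the torsion terms cancel in $d\mathbb{K}_1^L(\lambda)$, what survives is the pair $2\overline{\varphi^1}\wedge\varphi^2\wedge\varphi^3-2\varphi^1\wedge\overline{\varphi^2}\wedge\overline{\varphi^3}$ against the curvature terms, and the system you extract is exactly the paper's (3.13)--(3.14) (and, with the flipped sign of the $\lambda^2$-term, (3.17)--(3.18)); your identity $\mathbf{R}_{\bar 121\bar 2}=\tfrac{s}{12}$ under self-duality is also the paper's step. Your formula for $d\mathbb{K}_3^L(\lambda)$ is the paper's (3.19), and your observation that the torsion summand is literally the pull-back of $dF$ is a nice sharpening of the paper's argument that $\mu\wedge\overline{\varphi^1}\wedge\overline{\varphi^2}=0$ iff $\mu=0$ iff $(M,J,h)$ is K\"ahler (both use integrability of $J$ to see $\mu$ is of type $(1,0)$). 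There are two genuine divergences. First, the flatness step in (iii): the paper shows $\Omega_3^1=\Omega_4^2=\Omega_4^1=\Omega_3^2=0$ on $Q$ forces flatness by an elementary, self-contained frame-rotation argument (passing to $\tfrac{1}{\sqrt 2}(e_1+e_3, Je_1+Je_3, e_1-e_3, Je_1-Je_3)$ to deduce $\Omega_2^1=\Omega_4^3=0$), whereas you invoke the decomposition (2.2) plus the standard external fact that $W^+$ of a K\"ahler surface is determined by $s$, concluding $W^\pm=0$, $\mathrm{Ric}_0=0$, $s=0$, hence $R\equiv 0$. Your route is shorter; the paper's needs no imported curvature fact. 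Both are valid.

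The one real incompleteness is the step you yourself flag as the main obstacle: you assert, but do not prove, that vanishing of $\mathbf{R}_{\bar 1212}$, $\mathbf{R}_{\bar 121\bar 1}$, $\mathbf{R}_{\bar 122\bar 2}$ in \emph{every} unitary frame is equivalent to $\mathrm{Ric}_0=0$. This does require an argument: at a single frame these components only annihilate the entries $h(\hat R(\epsilon_l^+),\epsilon_m^-)$ with $m=2,3$ (together with some $\wedge^-$-diagonal entries), so one must change frames to reach $h(\hat R(\epsilon_l^+),\epsilon_1^-)=0$. The paper supplies precisely this: evaluating the hypotheses in the rotated frame $\tfrac{1}{\sqrt 2}(u_1+u_2,\,u_1-u_2)$ yields the extra identities (3.15), and (3.14) together with (3.15) give $h(\hat R(\epsilon_l^+),\epsilon_m^-)=0$ for all $l,m$, i.e.\ the Einstein condition. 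Inserting that rotated-frame computation (which is two lines) closes the gap and completes your proof; the converses are, as you say, direct substitution.
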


\begin{proof}
(i) For the exterior differentiation of $\mathbb{K}_1^L(\lambda)$, we have
\begin{align}\label{E:1}
d\mathbb{K}_1^L(\lambda)
                      &=\sqrt{-1}\{2\overline{\varphi^1}\wedge\varphi^2\wedge\varphi^3-2\varphi^1\wedge\overline{\varphi^2}\wedge\overline{\varphi^3}\notag\\
                      &~~~~~+\frac{1}{2}\lambda^2[(\Omega_3^1+\Omega_4^2)+\sqrt{-1}(\Omega_3^2-\Omega_4^1)]\wedge\overline{\varphi^3}\notag\\
                      &~~~~~-\frac{1}{2}\lambda^2[(\Omega_3^1+\Omega_4^2)-\sqrt{-1}(\Omega_3^2-\Omega_4^1)]\wedge\varphi^3\}.
\end{align}
Thus $d\mathbb{K}_1^L(\lambda)=0$ if and only if
$$2\varphi^1\wedge\overline{\varphi^2}\wedge\overline{\varphi^3}
=\frac{1}{2}\lambda^2[(\Omega_3^1+\Omega_4^2)+\sqrt{-1}(\Omega_3^2-\Omega_4^1)]\wedge\overline{\varphi^3}.$$
Together with (3.7), it follows that $d\mathbb{K}_1^L(\lambda)=0$ if and only if
\begin{equation}
\lambda^2\mathbf{R}_{\bar{1}21\bar{2}}=2,~~\mathbf{R}_{\bar{1}2\bar{1}2}=0,
\end{equation}
\begin{equation}
\mathbf{R}_{\bar{1}21\bar{1}}=\mathbf{R}_{\bar{1}212}=0~~(\textrm{equivalent to}~~\mathbf{R}_{\bar{1}22\bar{2}}=\mathbf{R}_{\bar{1}2\bar{1}\bar{2}}=0),
\end{equation}
on $Q$.

From the proof of Theorem 3.1, we have obtained that $\mathbf{R}_{\bar{1}2\bar{1}2}=0$ on $Q$ if and only if $(M, J, h)$ is self-dual. If $(M, J, h)$ is self-dual, then $h(\hat{R}(\epsilon_2^-),\epsilon_2^-)=\mathbf{R}_{\bar{1}21\bar{2}}=\frac{s}{12}$, where $s$ is the scalar curvature of $(M, J, h)$. Thus
$\lambda^2\mathbf{R}_{\bar{1}21\bar{2}}=2$ implies $s=\frac{24}{\lambda^2}$.

If $(u_1,u_2)\in Q$, then $\frac{1}{\sqrt{2}}(u_1+u_2, u_1-u_2)\in Q$. Meanwhile, (3.14) implies
\begin{equation}
\mathbf{R}_{\bar{1}11\bar{1}}=\mathbf{R}_{\bar{2}22\bar{2}},~~\mathbf{R}_{121\bar{1}}=\mathbf{R}_{122\bar{2}}
\end{equation}
on $Q$.
From (3.14) and (3.15), we have
$h(\hat{R}(\epsilon_l^+),\epsilon_m^-)=0$, $l,m=1,2,3$. Hence $(M, J, h)$ is Einstein.

Conversely, it is easy to prove by direct calculations.

(ii)  For the exterior differentiation of $\mathbb{K}_2^L(\lambda)$, we have
\begin{align}\label{E:1}
d\mathbb{K}_2^L(\lambda)
                      &=\sqrt{-1}\{2\overline{\varphi^1}\wedge\varphi^2\wedge\varphi^3-2\varphi^1\wedge\overline{\varphi^2}\wedge\overline{\varphi^3}\notag\\
                      &~~~~~-\frac{1}{2}\lambda^2[(\Omega_3^1+\Omega_4^2)+\sqrt{-1}(\Omega_3^2-\Omega_4^1)]\wedge\overline{\varphi^3}\notag\\
                      &~~~~~+\frac{1}{2}\lambda^2[(\Omega_3^1+\Omega_4^2)-\sqrt{-1}(\Omega_3^2-\Omega_4^1)]\wedge\varphi^3\}.
\end{align}
Thus $d\mathbb{K}_2^L(\lambda)=0$ if and only if
$$2\varphi^1\wedge\overline{\varphi^2}\wedge\overline{\varphi^3}
+\frac{1}{2}\lambda^2[(\Omega_3^1+\Omega_4^2)+\sqrt{-1}(\Omega_3^2-\Omega_4^1)]\wedge\overline{\varphi^3}=0.$$
Together with (3.7), it follows that $d\mathbb{K}_2^L(\lambda)=0$ if and only if
\begin{equation}
2+\lambda^2\mathbf{R}_{\bar{1}21\bar{2}}=0,~~\mathbf{R}_{\bar{1}2\bar{1}2}=0,
\end{equation}
\begin{equation}
\mathbf{R}_{\bar{1}21\bar{1}}=\mathbf{R}_{\bar{1}212}=0~~(\textrm{equivalent to}~~\mathbf{R}_{\bar{1}22\bar{2}}=\mathbf{R}_{\bar{1}2\bar{1}\bar{2}}=0),
\end{equation}
on  $Q$.

Therefore, the results follow from the proof as in (i).

(iii) For the exterior differentiation of $\mathbb{K}_3^L(\lambda)$, we have
\begin{align}\label{E:1}
d\mathbb{K}_3^L(\lambda)
                      &=\sqrt{-1}\{2\bar{\mu}\wedge\varphi^1\wedge\varphi^2-2\mu\wedge\overline{\varphi^1}\wedge\overline{\varphi^2}\notag\\
                      &~~~~~+\frac{1}{2}\lambda^2[(\Omega_3^1+\Omega_4^2)+\sqrt{-1}(\Omega_3^2-\Omega_4^1)]\wedge\overline{\varphi^3}\notag\\
                      &~~~~~-\frac{1}{2}\lambda^2[(\Omega_3^1+\Omega_4^2)-\sqrt{-1}(\Omega_3^2-\Omega_4^1)]\wedge\varphi^3\}.
\end{align}
Thus $d\mathbb{K}_3^L(\lambda)=0$ if and only if
$$\mu\wedge\overline{\varphi^1}\wedge\overline{\varphi^2}=0,~~\Omega_3^1+\Omega_4^2=\Omega_3^2-\Omega_4^1=0.$$

For the exterior differentiation of $\mathbb{K}_4^L(\lambda)$, we have
\begin{align}\label{E:1}
d\mathbb{K}_4^L(\lambda)
                      &=\sqrt{-1}\{2\bar{\mu}\wedge\varphi^1\wedge\varphi^2-2\mu\wedge\overline{\varphi^1}\wedge\overline{\varphi^2}\notag\\
                      &~~~~~-\frac{1}{2}\lambda^2[(\Omega_3^1+\Omega_4^2)+\sqrt{-1}(\Omega_3^2-\Omega_4^1)]\wedge\overline{\varphi^3}\notag\\
                      &~~~~~+\frac{1}{2}\lambda^2[(\Omega_3^1+\Omega_4^2)-\sqrt{-1}(\Omega_3^2-\Omega_4^1)]\wedge\varphi^3\}.
\end{align}
Thus $d\mathbb{K}_4^L(\lambda)=0$ if and only if
$$\mu\wedge\overline{\varphi^1}\wedge\overline{\varphi^2}=0,~~\Omega_3^1+\Omega_4^2=\Omega_3^2-\Omega_4^1=0.$$

It follows that the symplectic conditions for the metrics $\mathbb{K}_3^L(\lambda)$ and $\mathbb{K}_4^L(\lambda)$ are the same. Moreover, since $(M, J)$ is a complex surface, then $\mu\wedge\overline{\varphi^1}\wedge\overline{\varphi^2}=0$ if and only if $\mu=0$, i.e. $(M, J, h)$ is a  K\"{a}hler surface. It is well-known that for a K\"{a}hler surface, the curvature forms on $Q$ satisfy $\Omega_3^1=\Omega_4^2$, $\Omega_4^1=-\Omega_3^2$. Now, we claim that $\Omega_3^1=\Omega_4^2=\Omega_4^1=\Omega_3^2=0$ on $Q$ if and only if $(M, J, h)$ is flat.

Given a $J$-adapted orthonormal frame $(e_1, e_2=Je_1, e_3, e_4=Je_3)$, we define a new $J$-adapted orthonormal frame $\frac{1}{\sqrt{2}}(e_1+e_3, Je_1+Je_3,e_1-e_3, Je_1-Je_3)$. So $\Omega_4^1=\Omega_3^2=0$  on $Q$ implies
$h(R(\cdot,\cdot)(Je_1-Je_3),e_1+e_3)=0$, and then $\Omega_2^1=\Omega_4^3$ on $Q$. From $\Omega_3^1=\Omega_3^2=0$ and $\Omega_2^1=\Omega_4^3$, it follows $R_{1212}=R_{3412}=-R_{3124}-R_{3241}=0$. Therefore, $\Omega_2^1=\Omega_4^3=0$ on $Q$. Combining these facts, $(M, J, h)$ must be flat.

\end{proof}

\begin{thm} Let $(M, J, h)$ be a Hermitian surface. For the Lichnerowicz connection  on $Q=U(M)$, the almost Hermitian structures $(\mathbb{J}_i^L, \mathbb{K}_i^L(\lambda))$ on the twistor space $Z=\mathbb{P}(T^{1,0}M)$ have the following properties:
\begin{enumerate}[\upshape (i)]
\item $d\mathbb{K}_1^L(\lambda)\wedge\mathbb{K}_1^L(\lambda)=0$ (or equivalently, $d\mathbb{K}_2^L(\lambda)\wedge\mathbb{K}_2^L(\lambda)=0$) if and only if $(M, J, h)$ is self-dual;
\item $d\mathbb{K}_3^L(\lambda)\wedge\mathbb{K}_3^L(\lambda)=0$ (or equivalently, $d\mathbb{K}_4^L(\lambda)\wedge\mathbb{K}_4^L(\lambda)=0$) if and only if $(M, J, h)$ is a K\"{a}hler Einstein surface.
\end{enumerate}
\end{thm}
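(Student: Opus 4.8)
The plan is to form the five-forms $d\mathbb{K}_i^L(\lambda)\wedge\mathbb{K}_i^L(\lambda)$ on $Z$ directly from the expressions (3.12), (3.16), (3.18), (3.20) for $d\mathbb{K}_i^L(\lambda)$ already obtained in the proof of Theorem 3.4, together with the curvature expansion (3.7). Since $\dim_{\mathbb{R}}Z=6$, a five-form is determined by its six components, one for each omitted covector among $\{\varphi^1,\varphi^2,\varphi^3,\overline{\varphi^1},\overline{\varphi^2},\overline{\varphi^3}\}$; thus $d\mathbb{K}_i^L(\lambda)\wedge\mathbb{K}_i^L(\lambda)=0$ is equivalent to six scalar identities on $Q$. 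The decisive formal simplification is that a three-form wedged with $\mathbb{K}_i^L(\lambda)$ annihilates every summand sharing a factor with each of the three two-form terms of $\mathbb{K}_i^L(\lambda)$, and this removes almost all monomials at once.

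For part (i), the two ``universal'' cubic terms $\overline{\varphi^1}\wedge\varphi^2\wedge\varphi^3$ and $\varphi^1\wedge\overline{\varphi^2}\wedge\overline{\varphi^3}$ in (3.12) each meet all three terms of $\mathbb{K}_1^L(\lambda)$ and so drop out, while the term $\lambda^2\varphi^3\wedge\overline{\varphi^3}$ of $\mathbb{K}_1^L(\lambda)$ is annihilated by the explicit $\overline{\varphi^3}$ (resp. $\varphi^3$) in the curvature part. Hence only the curvature terms survive, and wedging $\tfrac12\lambda^2[(\Omega_3^1+\Omega_4^2)+\sqrt{-1}(\Omega_3^2-\Omega_4^1)]\wedge\overline{\varphi^3}$ against $\varphi^1\wedge\overline{\varphi^1}$ and against $\overline{\varphi^2}\wedge\varphi^2$ retains, by (3.7), precisely the diagonal components $\mathbf{R}_{\bar122\bar2}$ and $\mathbf{R}_{\bar121\bar1}$ (with the conjugate components supplied by the $\varphi^3$ term). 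I expect the outcome to read $d\mathbb{K}_1^L(\lambda)\wedge\mathbb{K}_1^L(\lambda)=0$ iff $\mathbf{R}_{\bar121\bar1}=\mathbf{R}_{\bar122\bar2}$ on $Q$. The essential remaining step is to show that this identity, valid for every unitary frame, is equivalent to the self-dual condition; I would do this by the same $U(2)$-frame variation used in the proof of Proposition 3.1(i) — substituting $(\hat u_1,\hat u_2)=(u_1,u_2)\mathfrak{a}$ for arbitrary $\mathfrak{a}\in U(2)$ and reading off the anti-self-dual block — reducing it to $\mathbf{R}_{\bar12\bar12}=0$ on $Q$, hence to $W^-=0$. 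Because (3.16) differs from (3.12) only in the signs of the curvature terms, the $\mathbb{K}_2^L(\lambda)$ computation is identical once the form is set to zero, giving the asserted equivalence.

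For part (ii), the torsion terms $\overline{\mu}\wedge\varphi^1\wedge\varphi^2$ and $\mu\wedge\overline{\varphi^1}\wedge\overline{\varphi^2}$ of (3.18) no longer annihilate $\mathbb{K}_3^L(\lambda)$. I would first record that, since $(M,J)$ is a complex surface, $\mu$ is of type $(1,0)$, say $\mu=a\varphi^1+b\varphi^2$ (this is the same fact used for the symplectic condition in Theorem 3.4(iii)). Wedging these torsion terms with the $\lambda^2\varphi^3\wedge\overline{\varphi^3}$ term of $\mathbb{K}_3^L(\lambda)$ produces the four components omitting $\varphi^1,\varphi^2,\overline{\varphi^1},\overline{\varphi^2}$, whose vanishing forces $a=b=0$, i.e. $\mu=0$, i.e. $(M,J,h)$ is Kähler. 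The curvature terms behave as in part (i) and contribute the component omitting $\varphi^3$ together with its conjugate; because $\mathbb{K}_3^L(\lambda)$ carries $+\varphi^2\wedge\overline{\varphi^2}$ in place of $\overline{\varphi^2}\wedge\varphi^2$, I expect the surviving condition to be $\mathbf{R}_{\bar121\bar1}+\mathbf{R}_{\bar122\bar2}=0$ on $Q$, which is the vanishing of the off-diagonal Ricci-form component $\sum_a\mathbf{R}_{a\bar a\bar12}$.

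Once Kähler is in hand, the last step is again the frame-rotation argument: if $\sum_a\mathbf{R}_{a\bar a\bar12}=0$ in every unitary frame, then the Ricci form must be a pointwise multiple of $F$, so $(M,J,h)$ is Kähler--Einstein; the converse is immediate. The $\mathbb{K}_4^L(\lambda)$ statement follows since (3.20) and (3.18) differ only by the signs of the curvature terms, which cancel after setting the form to zero. I expect the genuine difficulty to lie entirely in these two passages from a frame-dependent scalar identity on $Q$ to the intrinsic curvature condition on $M$ (self-dual in (i), Einstein in (ii)); the elimination of the universal and torsion monomials and the component bookkeeping of the five-forms are purely mechanical.
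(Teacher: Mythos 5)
Your proposal is correct and follows essentially the same route as the paper: the wedge computations isolate exactly the conditions $\mathbf{R}_{\bar{1}21\bar{1}}=\mathbf{R}_{\bar{1}22\bar{2}}$ (part (i)) and Kähler plus $\mathbf{R}_{\bar{1}21\bar{1}}+\mathbf{R}_{\bar{1}22\bar{2}}=0$ (part (ii)), and the paper then applies the same $U(2)$-frame variation of Proposition 3.1(i) to get self-duality, and a frame-rotation argument to get the Einstein condition. The only cosmetic difference is that the paper carries out the Ricci step in real components with an explicit rotated $J$-adapted frame, whereas you phrase it as the Ricci form being forced to be a pointwise multiple of $F$ — the same argument in complex notation.
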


\begin{proof}
(i) From the exterior differential formula (3.12), we have
\begin{align}\label{E:1}
\mathbb{K}_1^L(\lambda)\wedge d\mathbb{K}_1^L(\lambda)
                      &=\lambda^2[(\mathbf{R}_{\bar{1}22\bar{2}}-\mathbf{R}_{\bar{1}21\bar{1}})
                      \varphi^1\wedge\overline{\varphi^1}\wedge\overline{\varphi^2}\wedge\varphi^2\wedge \overline{\varphi^3}\notag\\
                      &~~~~+(\overline{\mathbf{R}_{\bar{1}22\bar{2}}}-\overline{\mathbf{R}_{\bar{1}21\bar{1}}})
                      \varphi^1\wedge\overline{\varphi^1}\wedge\overline{\varphi^2}\wedge\varphi^2\wedge \varphi^3].
\end{align}

From the exterior differential formula (3.16), we have
\begin{align}\label{E:1}
\mathbb{K}_2^L(\lambda)\wedge d\mathbb{K}_2^L(\lambda)
                      &=\lambda^2[(\mathbf{R}_{\bar{1}21\bar{1}}-\mathbf{R}_{\bar{1}22\bar{2}})
                      \varphi^1\wedge\overline{\varphi^1}\wedge\overline{\varphi^2}\wedge\varphi^2\wedge \overline{\varphi^3}\notag\\
                      &~~~~+(\overline{\mathbf{R}_{\bar{1}21\bar{1}}}-\overline{\mathbf{R}_{\bar{1}22\bar{2}}})
                      \varphi^1\wedge\overline{\varphi^1}\wedge\overline{\varphi^2}\wedge\varphi^2\wedge \varphi^3].
\end{align}
Thus $d\mathbb{K}_1^L(\lambda)\wedge\mathbb{K}_1^L(\lambda)=0$ (or equivalently, $d\mathbb{K}_2^L(\lambda)\wedge\mathbb{K}_2^L(\lambda)=0$) if and only if
$\mathbf{R}_{\bar{1}21\bar{1}}-\mathbf{R}_{\bar{1}22\bar{2}}=0$ on $Q$. Now, we should prove that this condition is equivalent to $(M, J, h)$ is self-dual. The idea is the same as in the proof of Theorem 3.1 (i).

Fix an unitary frame $(u_1, u_2)\in Q$, then any unitary frame $(\hat{u}_1, \hat{u}_2)$ can be written as
$$(\hat{u}_1, \hat{u}_2)=(u_1, u_2)\mathfrak{a},$$
where $\mathfrak{a}=\left(\begin{array}{cc}a_1&a_2\\
a_3&a_4\end{array}\right) \in U(2)$. If $R(\overline{\hat{u}_1},\hat{u}_2, \hat{u}_1, \overline{\hat{u}_1})-R(\overline{\hat{u}_1},\hat{u}_2, \hat{u}_2, \overline{\hat{u}_2})=0$, by direct calculations, we have
\begin{eqnarray*}
&&2\overline{a_1}\overline{a_2}a_4^2\mathbf{R}_{\bar{1}2\bar{1}2}+2\overline{a_3}\overline{a_4}a_2^2\mathbf{R}_{1\bar{2}1\bar{2}}\\
&&+(2a_2\overline{a_2}-1)\overline{a_1}a_2(\mathbf{R}_{1\bar{1}1\bar{1}}+\mathbf{R}_{2\bar{2}2\bar{2}}
-2\mathbf{R}_{1\bar{1}2\bar{2}}+2\mathbf{R}_{1\bar{2}\bar{1}2})=0.
\end{eqnarray*}
Thus, if $\mathbf{R}_{\bar{1}21\bar{1}}-\mathbf{R}_{\bar{1}22\bar{2}}=0$ on $Q$, then the following equations hold:
\begin{equation}
\mathbf{R}_{\bar{1}2\bar{1}2}=0,~~
\mathbf{R}_{1\bar{1}1\bar{1}}+\mathbf{R}_{2\bar{2}2\bar{2}}-2\mathbf{R}_{1\bar{1}2\bar{2}}+2\mathbf{R}_{1\bar{2}\bar{1}2}=0.
\end{equation}

As we proved in Theorem 3.1, $\mathbf{R}_{\bar{1}2\bar{1}2}=0$ on $Q$  implies $(M, J, h)$ is self-dual. Conversely, it is easy to prove by direct calculations.

(ii) From the exterior differential formula (3.19), we have
\begin{align}\label{E:1}
\mathbb{K}_3^L(\lambda)\wedge d\mathbb{K}_3^L(\lambda)
                      &=-\lambda^2[(\mathbf{R}_{\bar{1}22\bar{2}}+\mathbf{R}_{\bar{1}21\bar{1}})
                      \varphi^1\wedge\overline{\varphi^1}\wedge\varphi^2\wedge\overline{\varphi^2}\wedge \overline{\varphi^3}\notag\\
                      &~~~~~+(\overline{\mathbf{R}_{\bar{1}22\bar{2}}}+\overline{\mathbf{R}_{\bar{1}21\bar{1}}})
                      \varphi^1\wedge\overline{\varphi^1}\wedge\varphi^2\wedge\overline{\varphi^2}\wedge \varphi^3\notag\\
                      &~~~~~+2(\bar{\mu}\wedge\varphi^1\wedge\varphi^2
                      -\mu\wedge\overline{\varphi^1}\wedge\overline{\varphi^2})\wedge \varphi^3\wedge\overline{\varphi^3}].
\end{align}

From the exterior differential formula (3.20), we have
\begin{align}\label{E:1}
\mathbb{K}_4^L(\lambda)\wedge d\mathbb{K}_4^L(\lambda)
                      &=\lambda^2[(\overline{\mathbf{R}_{\bar{1}22\bar{2}}}+\overline{\mathbf{R}_{\bar{1}21\bar{1}}})
                      \varphi^1\wedge\overline{\varphi^1}\wedge\varphi^2\wedge\overline{\varphi^2}\wedge \varphi^3\notag\\
                      &~~~~~+(\mathbf{R}_{\bar{1}22\bar{2}}+\mathbf{R}_{\bar{1}21\bar{1}})
                      \varphi^1\wedge\overline{\varphi^1}\wedge\varphi^2\wedge\overline{\varphi^2}\wedge \overline{\varphi^3}\notag\\
                      &~~~~~+2(\bar{\mu}\wedge\varphi^1\wedge\varphi^2
                      -\mu\wedge\overline{\varphi^1}\wedge\overline{\varphi^2})\wedge\varphi^3\wedge\overline{\varphi^3}].
\end{align}
Thus $d\mathbb{K}_3^L(\lambda)\wedge\mathbb{K}_3^L(\lambda)=0$ (or equivalently, $d\mathbb{K}_4^L(\lambda)\wedge\mathbb{K}_4^L(\lambda)=0$) if and only if
$\mu\wedge\overline{\varphi^1}\wedge\overline{\varphi^2}=0$ and $\mathbf{R}_{\bar{1}21\bar{1}}+\mathbf{R}_{\bar{1}22\bar{2}}=0$ on $Q$.

As proved in Theorem 3.4 (iii), $\mu\wedge\overline{\varphi^1}\wedge\overline{\varphi^2}=0$ if and only if $(M, J, h)$ is a K\"{a}hler surface.
Now, we write $\mathbf{R}_{\bar{1}21\bar{1}}$ and $\mathbf{R}_{\bar{1}22\bar{2}}$ in the following real version:
\begin{eqnarray*}
&&\mathbf{R}_{\bar{1}21\bar{1}}=R(\overline{u_1}, u_2, u_1, \overline{u_1})\\
&&~~~~~~~~~=\frac{1}{4}R(e_1+\sqrt{-1}e_2,e_3-\sqrt{-1}e_4,e_1-\sqrt{-1}e_2,e_1+\sqrt{-1}e_2)\\
&&~~~~~~~~~=\frac{1}{2}[(R_{1412}-R_{2312})+\sqrt{-1}(R_{1312}+R_{2412})],
\end{eqnarray*}
and
\begin{eqnarray*}
&&\mathbf{R}_{\bar{1}22\bar{2}}=R(\overline{u_1}, u_2, u_2, \overline{u_2})\\
&&~~~~~~~~~=\frac{1}{4}R(e_1+\sqrt{-1}e_2,e_3-\sqrt{-1}e_4,e_3-\sqrt{-1}e_4,e_3+\sqrt{-1}e_4)\\
&&~~~~~~~~~=\frac{1}{2}[(R_{1434}-R_{2334})+\sqrt{-1}(R_{1334}+R_{2434})].
\end{eqnarray*}
It follows that $\mathbf{R}_{\bar{1}21\bar{1}}+\mathbf{R}_{\bar{1}22\bar{2}}=0$ on $Q$ is equivalent to
$$R_{1412}-R_{2312}+R_{1434}-R_{2334}=0,$$
$$R_{1312}+R_{2412}+R_{1334}+R_{2434}=0,$$
 i.e. the Ricci tensor of the Levi-Civita connection on  $(M, J, h)$ satisfies
\begin{equation}
Ric(e_2, e_4)+Ric(e_1,e_3)=0,~~Ric(e_2, e_3)-Ric(e_1, e_4)=0,
\end{equation}
for any $J$-adapted orthonormal frame $(e_1, e_2=Je_1, e_3, e_4=Je_3)$.

For a K\"{a}hler surface  $(M, J, h)$, the Ricci tensor is $J$-invariant, so the above equations imply
\begin{equation}
Ric(e_1,e_3)=Ric(e_1, e_4)=0,~~Ric(e_2,e_3)=Ric(e_2, e_4)=0,
\end{equation}
for any $J$-adapted orthonormal frame $(e_1, e_2=Je_1, e_3, e_4=Je_3)$.

Given a $J$-adapted orthonormal frame $(e_1, e_2=Je_1, e_3, e_4=Je_3)$,   we can define a new $J$-adapted orthonormal frame $\frac{1}{\sqrt{2}}(e_1+e_3, e_2+e_4,e_1-e_3, e_2-e_4)$. From (3.27), we have
$$Ric(e_1+e_3,e_1-e_3)=Ric(e_1,e_1)-Ric(e_3,e_3)=0,$$
$$Ric(e_1+e_3,e_2+e_4)=Ric(e_1,e_2)+Ric(e_3,e_4)=0,$$
$$Ric(e_1+e_3,e_2-e_4)=Ric(e_1,e_2)-Ric(e_3,e_4)=0.$$
Together with the fact that the Ricci tensor is $J$-invariant, it follows
\begin{equation}
Ric(e_1,e_2)=Ric(e_3,e_4)=0,
\end{equation}
\begin{equation}
Ric(e_1,e_1)=Ric(e_3,e_3)=Ric(e_2,e_2)=Ric(e_4,e_4).
\end{equation}
Therefore, from equations (3.27), (3.28) and (3.29), it is easy to see that  $(M, J, h)$ is Einstein.

\end{proof}

At the end of this section, for some special Hermitian surface $(M, J, h)$, we show the $\sqrt{-1}\partial\bar{\partial}$-formulas of $\mathbb{K}_1^L(\lambda)$, $\mathbb{K}_3^L(\lambda)$, $\mathbb{K}_4^L(\lambda)$.

Let $(M, J, h)$ be a self-dual Hermitian surface. If the scalar curvature $s$ of $(M, J, h)$ is constant, then from (3.12), (3.7), and the structure equations on $Q$, we have
\begin{eqnarray*}
&&\sqrt{-1}\partial\bar{\partial}\mathbb{K}_1^L(\lambda)=(\sqrt{-1})^2(2-\lambda^2\frac{s}{6})(-\frac{s}{12}\varphi^1\wedge\overline{\varphi^1}\wedge \overline{\varphi^2}\wedge \varphi^2\\
&&~~~~~~~~~~~~~~~~~~~~~~~~~~~~+\overline{\varphi^2}\wedge \varphi^2\wedge\varphi^3\wedge \overline{\varphi^3}
+\varphi^3\wedge \overline{\varphi^3}\wedge \varphi^1\wedge\overline{\varphi^1})\\
&&~~~~~~~~~~~~~~~~~~~~~~~~~~~~+\lambda^2[\sqrt{-1}(\Omega_2^1-\Omega_4^3)\wedge\varphi^3\wedge \overline{\varphi^3}-\tau^3\wedge\overline{\tau^3}],
\end{eqnarray*}
where $\tau^3=\frac{1}{2}[(\Omega_3^1+\Omega_4^2)+\sqrt{-1}(\Omega_3^2-\Omega_4^1)]$. We observe that if the scalar curvature  $s$ is negative, then $\sqrt{-1}\partial\bar{\partial}\mathbb{K}_1^L(\lambda)$ is a positive $(2,2)$-form on the twistor space $(Z, \mathbb{J}_1^L)$, for any positive and sufficiently small $\lambda$.

If the Ricci tensor of a Hermitian surface $(M, J, h)$ is $J$-invariant, from (3.19), (3.20), and the structure equations on $Q$, then
\begin{eqnarray*}
&&\sqrt{-1}\partial\bar{\partial}\mathbb{K}_3^L(\lambda)=
\sqrt{-1}\partial\bar{\partial}\mathbb{K}_4^L(\lambda)\\
&&~~~~~~~~~~~~~~~~~~~~~~~~=\frac{1}{4}(s-s^\ast)\varphi^1\wedge\overline{\varphi^1}\wedge\varphi^2\wedge \overline{\varphi^2}\\
&&~~~~~~~~~~~~~~~~~~~~~~~~~~~+2(\sqrt{-1})^2\mu\wedge\bar{\mu}\wedge(\varphi^1\wedge\overline{\varphi^1}+\varphi^2\wedge \overline{\varphi^2})\\
&&~~~~~~~~~~~~~~~~~~~~~~~~~~~+\lambda^2[\sqrt{-1}(\Omega_2^1-\Omega_4^3)\wedge\varphi^3\wedge \overline{\varphi^3}-\tau^3\wedge\overline{\tau^3}],
\end{eqnarray*}
where $s^\ast$ is the $\ast$-scalar curvature of $(M, J, h)$ \cite{Va1}, $\mu=-\frac{1}{2}[(\omega_3^1-\omega_4^2)+\sqrt{-1}(\omega_3^2+\omega_4^1)]$.
In particular, for a K\"{a}hler surface $(M, J, h)$, we have $\mu=0$ and $s=s^\ast$. It follows that
\begin{eqnarray*}
&&\sqrt{-1}\partial\bar{\partial}\mathbb{K}_3^L(\lambda)=
\sqrt{-1}\partial\bar{\partial}\mathbb{K}_4^L(\lambda)\\
&&~~~~~~~~~~~~~~~~~~~~~~~~=\lambda^2[\sqrt{-1}(\Omega_2^1-\Omega_4^3)\wedge\varphi^3\wedge \overline{\varphi^3}-\tau^3\wedge\overline{\tau^3}].
\end{eqnarray*}

\section{Twistor geometry II: induced by the Chern connection}

In this section, we will study the twistor geometry induced by the Chern connection (also called the second canonical connection) on a Hermitian
surface $(M, J, h)$. Firstly, we consider the integrability and conformal property of almost complex structures
$\mathbb{J}_i^{Ch}$, for $i=1, 2, 3, 4$, on $Z=\mathbb{P}(T^{1,0}M)$. Some metric properties of the natural almost Hermitian metrics $(\mathbb{K}_i^{Ch}(\lambda), \mathbb{J}_i^{Ch})$ on $Z=\mathbb{P}(T^{1,0}M)$ are also obtained.

As in section 3, $\varphi=(\varphi^1, \varphi^2)^t$ is the canonical form on the principal bundle $Q=U(M)$, and let $\psi=(\psi_b^a)$ be the Chern connection form. We denote by $\mathbf{T}=(\mathbf{T}^1, \mathbf{T}^2)^t$ and $\Psi=(\Psi_b^a)$ the corresponding torsion form and curvature form on $Q$, respectively. Then the following structure equations hold:
\begin{equation}
d\varphi^a=-\psi_b^a\wedge\varphi^b+\mathbf{T}^a,
\end{equation}
\begin{equation}
d\psi_b^a=-\psi_c^a\wedge\psi_b^c+\Psi_b^a,
\end{equation}
where $\mathbf{T}^1=\mathbf{T}_{12}^1\varphi^1\wedge\varphi^2$,  $\mathbf{T}^2=\mathbf{T}_{12}^2\varphi^1\wedge\varphi^2$.
$\Psi_b^a=\mathbf{K}_{\bar{a}bc\bar{d}}\varphi^c\wedge\overline{\varphi^d}$,
and $\mathbf{K}_{\bar{a}bc\bar{d}}=K(\overline{u_a}, u_b, u_c, \overline{u_d})$,  $(u_1, u_2)\in Q$.

Now, we can define four natural almost complex structures, denoted by $\mathbb{J}_i^{Ch}$, on $Z=\mathbb{P}(T^{1,0}M)$ as follows. Set $\varphi^3=\psi_2^1$. Here for convenience, we still use the notation $\varphi^3$ as in previous section, but with a different meaning.

~~~~~~~~~~~~~~~~~~~~~~~~~~~~~~~$\mathbb{J}_1^{Ch}$:~~a basis of $(1, 0)$-forms is $\{\varphi^1, \overline{\varphi^2}, \varphi^3\}$;

~~~~~~~~~~~~~~~~~~~~~~~~~~~~~~~$\mathbb{J}_2^{Ch}$:~~a basis of $(1, 0)$-forms is $\{\varphi^1, \overline{\varphi^2}, \overline{\varphi^3}\}$;

~~~~~~~~~~~~~~~~~~~~~~~~~~~~~~~$\mathbb{J}_3^{Ch}$:~~a basis of $(1, 0)$-forms is $\{\varphi^1, \varphi^2, \varphi^3\}$;

~~~~~~~~~~~~~~~~~~~~~~~~~~~~~~~$\mathbb{J}_4^{Ch}$:~~a basis of $(1, 0)$-forms is $\{\varphi^1, \varphi^2, \overline{\varphi^3}\}$.

\noindent A natural family of $\mathbb{J}_i^{Ch}$-compatible Riemannian metrics, denoted by $h_\lambda^{Ch}$, on $Z=\mathbb{P}(T^{1,0}M)$ is
\begin{equation}
h_\lambda^{Ch}=u^\ast(\varphi^1\cdot\overline{\varphi^1}+\varphi^2\cdot\overline{\varphi^2}+\lambda^2\varphi^3\cdot\overline{\varphi^3}),
\end{equation}
where parameter $\lambda>0$, $u$ is a local section of the fibration $\pi_1: Q\rightarrow Z$. The associated fundamental 2-forms are denoted by $\mathbb{K}_i^{Ch}(\lambda)$.

\begin{prop}
Let $(M, J, h)$ be a Hermitian surface. For the Chern connection  on $Q=U(M)$, the almost complex structures $\mathbb{J}_1^{Ch}$, $\mathbb{J}_2^{Ch}$, $\mathbb{J}_3^{Ch}$ and $\mathbb{J}_4^{Ch}$ on the twistor space $Z=\mathbb{P}(T^{1,0}M)$ have the following properties:
\begin{enumerate}[\upshape (i)]
\item $\mathbb{J}_1^{Ch}=\mathbb{J}_1^L$. Thus $\mathbb{J}_1^{Ch}$ is integrable if and only if $(M, J, h)$ is self-dual;
\item $\mathbb{J}_2^{Ch}$ is not integrable;
\item \cite{Va2} $\mathbb{J}_3^{Ch}$ and $\mathbb{J}_4^{Ch}$ are integrable.
\end{enumerate}
\end{prop}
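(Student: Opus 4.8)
The plan is to exploit the fact that the Chern connection differs from the Lichnerowicz connection only in its torsion, while the almost complex structures $\mathbb{J}_i$ depend on the connection form $\psi_b^a$ and the canonical forms $\varphi^1, \varphi^2$. For all four statements I would use the Newlander--Nirenberg integrability criterion in the form already invoked in Proposition 3.1: $\mathbb{J}_i^{Ch}$ is integrable if and only if $d\Omega^{1,0}_{\mathbb{J}_i^{Ch}}(Z) \subseteq \Omega^{2,0}_{\mathbb{J}_i^{Ch}}(Z) \oplus \Omega^{1,1}_{\mathbb{J}_i^{Ch}}(Z)$, and the computation reduces to inspecting the $(0,2)$-components of $d\varphi^1$, $d\varphi^2$, $d\varphi^3$ with respect to the relevant basis of $(1,0)$-forms, using the structure equations $(4.1)$ and $(4.2)$.

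\textbf{Part (i).} For $\mathbb{J}_1$ the $(1,0)$-basis is $\{\varphi^1, \overline{\varphi^2}, \varphi^3\}$, which involves only $\varphi^1, \varphi^2$ (the canonical form) and $\varphi^3=\psi^1_2$ (the connection form), but never the torsion. The key observation is that the pair $(\varphi^1,\varphi^2)$ is the \emph{same} canonical form on $Q=U(M)$ regardless of which Hermitian connection we choose, and that $\psi^1_2$ and the Lichnerowicz $\phi^1_2$ both project to the same $\mathfrak{u}(2)$-valued object governing the complex structure along the fibre. Concretely, I would show that the two connections $\psi$ and $\phi$ induce the same splitting of $TZ$ into the relevant $(1,0)/(0,1)$ pieces for $\mathbb{J}_1$; this is cleanest if one checks that $\psi^1_2-\phi^1_2$ is a combination of $\varphi^a,\overline{\varphi^a}$ (a tensorial torsion term) that does not alter the span of $\{\varphi^1,\overline{\varphi^2},\varphi^3\}$ as a distribution, only its representative. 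Thus $\mathbb{J}_1^{Ch}=\mathbb{J}_1^L$ as almost complex structures, and the integrability criterion from Proposition 3.1(i) transfers verbatim, giving self-duality. This identification is the substance of Remark 4.2, so the same argument proves both.

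\textbf{Part (ii).} Here the $(1,0)$-basis is $\{\varphi^1,\overline{\varphi^2},\overline{\varphi^3}\}$. Taking $d\overline{\varphi^3}=\overline{\Psi^1_2}+\cdots$ from the conjugate of $(4.2)$ and extracting its component in $\overline{\varphi^1}\wedge\varphi^2$ (a $(0,2)$-form for this structure), I expect a curvature coefficient $\mathbf{K}_{\bar12\bar12}$-type term that \emph{cannot} be forced to vanish, exactly as the torsion term $\tau^2=-\mu\wedge\overline{\varphi^1}$ in $d\varphi^2$ produces an obstruction independent of any metric condition. As Remark 3.2 already notes for arbitrary Hermitian connections, $\mathbb{J}_2$ is never integrable, so I would simply point to the non-vanishing $(0,2)$-piece in the structure equations on $Q$.

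\textbf{Part (iii).} This is the point where the Chern connection improves on the Lichnerowicz one, and I expect it to be the main obstacle only in the sense that it requires the special structure of the Chern torsion. For $\mathbb{J}_3$ the $(1,0)$-basis is $\{\varphi^1,\varphi^2,\varphi^3\}$, so I must check that $d\varphi^1$, $d\varphi^2$, $d\varphi^3$ have no $(0,2)$-component. By $(4.1)$ the Chern torsion satisfies $\mathbf{T}^a=\mathbf{T}^a_{12}\,\varphi^1\wedge\varphi^2$, i.e. it is of pure type $(2,0)$ in $\varphi^1,\varphi^2$ — this is the defining feature of the Chern connection ($(M,J)$ being an integrable complex surface means the Chern connection has torsion of type $(2,0)$ only). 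Hence $d\varphi^1,d\varphi^2$ are already in $\Omega^{2,0}\oplus\Omega^{1,1}$. For $d\varphi^3=d\psi^1_2$, equation $(4.2)$ gives $\Psi^1_2=\mathbf{K}_{\bar12c\bar d}\,\varphi^c\wedge\overline{\varphi^d}$, which is of type $(1,1)$ in $\{\varphi^1,\varphi^2\}$, hence contributes no $(0,2)$-part; the remaining $-\psi^1_c\wedge\psi^c_2$ terms lie in the ideal generated by $\varphi^3$ and its conjugate and are handled by the Maurer--Cartan structure. Therefore all $(0,2)$-components vanish identically, with no curvature or torsion condition on $M$, and $\mathbb{J}_3^{Ch}$ is integrable; the same computation (replacing $\varphi^3$ by $\overline{\varphi^3}$) gives $\mathbb{J}_4^{Ch}$, consistent with Vaisman's observation in Remark 3.2 that $\mathbb{J}_3$ and $\mathbb{J}_4$ are integrable or not simultaneously. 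The one subtlety to verify carefully is that the curvature $\Psi^1_2$ of the Chern connection genuinely has no $\overline{\varphi^c}\wedge\overline{\varphi^d}$ term — this follows because the Chern connection is the unique Hermitian connection compatible with the holomorphic structure, so its $(0,2)$-curvature vanishes, which is precisely what makes (iii) hold unconditionally in contrast to the Lichnerowicz case of Proposition 3.1(iii).
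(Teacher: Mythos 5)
Your part (iii) is correct and is essentially the paper's own argument: for the Chern connection the torsion is of pure type $(2,0)$ and the curvature of type $(1,1)$, so none of $d\varphi^1$, $d\varphi^2$, $d\varphi^3$ (resp.\ $d\overline{\varphi^3}$) acquires a $(0,2)$-component for $\mathbb{J}_3^{Ch}$ (resp.\ $\mathbb{J}_4^{Ch}$). The gaps are in (i) and (ii). In (ii) you locate the obstruction in the wrong place: you point at a curvature coefficient in $d\overline{\varphi^3}$ and assert it ``cannot be forced to vanish,'' but no curvature term can be the reason $\mathbb{J}_2^{Ch}$ is \emph{never} integrable, since on a flat K\"ahler surface (a flat torus, say) every Chern curvature coefficient vanishes identically while $\mathbb{J}_2^{Ch}$ is still non-integrable; an argument resting on a curvature coefficient would simply fail there. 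The obstruction the paper means by ``obvious from the structure equations'' is connection-theoretic, not curvature-theoretic: in $d\varphi^1=-\psi_1^1\wedge\varphi^1-\psi_2^1\wedge\varphi^2+\mathbf{T}^1$ the middle term equals $-\varphi^3\wedge\varphi^2=\varphi^2\wedge\varphi^3$, and for $\mathbb{J}_2^{Ch}$ both $\varphi^2$ and $\varphi^3$ are $(0,1)$-forms, so this is a $(0,2)$-form with constant coefficient $1$ that nothing can cancel (the term $-\psi_1^1\wedge\varphi^1$ lies in the ideal of $\varphi^1$, and $\mathbf{T}^1=\mathbf{T}^1_{12}\varphi^1\wedge\varphi^2$ is of type $(1,1)$ for this structure). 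This same term is why $\mathbb{J}_2$ fails for \emph{every} Hermitian connection (Remark 3.2); your appeal to the Lichnerowicz torsion $\tau^2$ is likewise beside the point.

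In (i) your plan coincides with the paper's, but the step you defer is exactly the nontrivial one, and your formulation of it is too weak to carry the proof. Knowing that $\psi_2^1-\phi_2^1$ is ``a combination of $\varphi^a,\overline{\varphi^a}$'' does not suffice: a horizontal form containing, say, a $\varphi^2$- or $\overline{\varphi^1}$-component would change the span of $\{\varphi^1,\overline{\varphi^2},\varphi^3\}$ and hence change the almost complex structure. What must be verified --- and what the paper obtains by comparing the structure equations of $\phi$ and $\psi$ --- is the identity
$$\phi_2^1-\psi_2^1=\tfrac{1}{2}\bigl(\mathbf{T}_{21}^1\,\varphi^1-\overline{\mathbf{T}_{12}^2}\,\overline{\varphi^2}\bigr),$$
i.e.\ the difference lies in the span of $\varphi^1$ and $\overline{\varphi^2}$ alone, which are themselves $(1,0)$-forms for $\mathbb{J}_1$. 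Only with this identity does ``the span is unchanged'' become a theorem rather than an assertion; after that, the reduction to Proposition 3.1(i), and hence to self-duality, goes through exactly as you describe, and it also yields the statement of Remark 4.2 for the whole Gauduchon family.
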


\begin{proof}
On the principal bundle  $Q$, the Lichnerowicz connection form $\phi$ and the Chern connection form $\psi$ have the following relations:
$$\phi-\psi:=\left(\begin{array}{cc}\gamma_1^1&\gamma_2^1\\
\gamma_1^2&\gamma_2^2\end{array}\right),$$
where
$$\gamma_1^1=\frac{1}{2}(\mathbf{T}_{12}^1\varphi^2-\overline{\mathbf{T}_{12}^1}\overline{\varphi^2}),
~~\gamma_2^1=\frac{1}{2}(\mathbf{T}_{21}^1\varphi^1-\overline{\mathbf{T}_{12}^2}\overline{\varphi^2}),$$
$$\gamma_1^2=\frac{1}{2}(\mathbf{T}_{12}^2\varphi^2-\overline{\mathbf{T}_{21}^1}\overline{\varphi^1}),
~~\gamma_2^2=\frac{1}{2}(\mathbf{T}_{21}^2\varphi^1-\overline{\mathbf{T}_{21}^2}\overline{\varphi^1}).$$

(i) Since $\phi_2^1-\psi_2^1=\frac{1}{2}(\mathbf{T}_{21}^1\varphi^1-\overline{\mathbf{T}_{12}^2}\overline{\varphi^2})$, from the constructions of
$\mathbb{J}_1^{Ch}$ and $\mathbb{J}_1^L$, it follows that $\mathbb{J}_1^{Ch}=\mathbb{J}_1^L$. Thus, by using the Proposition 3.1, we have  $\mathbb{J}_1^{Ch}$ is integrable if and only if $(M, J, h)$ is self-dual. 

The integrability of $\mathbb{J}_1^{Ch}$ can also be proved directly from the structure equations. In fact,
$\mathbb{J}_1^{Ch}$ is integrable if and only if $\mathbf{K}_{\bar{1}22\bar{1}}=0$, i.e. $\mathbf{K}_{\bar{1}2\bar{1}2}=0$ on  $Q$. From the curvature relation (2.18), we obtain that $\mathbf{K}_{\bar{1}2\bar{1}2}=0$ if and only if $\mathbf{R}_{\bar{1}2\bar{1}2}=0$  on $Q$. Hence, the result follows from a claim in the proof of Proposition 3.1 (i).

(ii) From the structure equations of the Chern connection on $Q$, it is obvious that $\mathbb{J}_2^{Ch}$ is not integrable. 

(iii) Indeed, for a Hermitian surface $(M, J, h)$, the Chern connection is the unique Hermitian connection with $\mathbf{T}^{1,1}=0$. Meanwhile, the corresponding curvature forms are of $J$-type $(1,1)$. Then the result is also from the structure equations of the Chern connection on $Q$.

\end{proof}

\begin{rem}
For canonical Hermitian connections $\{D^t\}$ in (2.16), the connection forms on the principal bundle $Q=U(M)$ are
$$\phi-t(\phi-\psi)=\left(\begin{array}{cc}\phi_1^1&\phi_2^1\\
\phi_1^2&\phi_2^2\end{array}\right)-t\left(\begin{array}{cc}\gamma_1^1&\gamma_2^1\\
\gamma_1^2&\gamma_2^2\end{array}\right).$$
From the construction of almost complex structures $\mathbb{J}_i$ on $Z=\mathbb{P}(T^{1,0}M)$ in section 2, we observe that canonical Hermitian connections $\{D^t\}$
induce the same $\mathbb{J}_1$, but in general, different $\mathbb{J}_2$, $\mathbb{J}_3$, $\mathbb{J}_4$, for various $t$. In fact, it is natural to study the twistor geometry associated with this family of canonical Hermitian connections.

\end{rem}

Let's to see how the almost complex structures $\mathbb{J}_i^{Ch}$ on $Z$ depend on the choice of metric $h$ in the conformal class $[h]$. For
a Hermitian surface $(M, J, \tilde{h}=e^{2f}h)$ with a conformally related metric $\tilde{h}$, let $\tilde{\pi}: \tilde{Q}\rightarrow M$  be the $U(2)$-principal
bundle of $\tilde{h}$-unitary frames on $M$. The corresponding canonical form,  Chern connection form and torsion form  on $\tilde{Q}$ are denoted by $\tilde{\varphi}=(\tilde{\varphi}^1, \tilde{\varphi}^2)^t$, $\tilde{\psi}=(\tilde{\psi}_b^a)$, $\tilde{\mathbf{T}}=(\tilde{\mathbf{T}}^1, \tilde{\mathbf{T}}^2)^t$, respectively. Then
$$d\tilde{\varphi}^a=-\tilde{\psi}_b^a\wedge\tilde{\varphi}^b+\tilde{\mathbf{T}}^a.$$
As in section 3, we have a bundle isomorphism
$$\mathfrak{f}: \tilde{Q}\rightarrow Q,~~~~~\mathfrak{f}(x, \tilde{u}):=(x, e^f\tilde{u}).$$
Thus $\mathfrak{f}^\ast\varphi=e^{-f}\tilde{\varphi}$. Set $df=f_b\tilde{\varphi}^b+\overline{f_b\tilde{\varphi}^b}$. Then by the structure equations of $\psi$ and $\tilde{\psi}$, we obtain
\begin{equation}
\mathfrak{f}^\ast\psi_b^a=\tilde{\psi}_b^a+\overline{f_c\tilde{\varphi}^c}\delta_b^a-f_c\tilde{\varphi}^c\delta_b^a.
\end{equation}
In particular, $\mathfrak{f}^\ast\psi_2^1=\tilde{\psi}_2^1$. Therefore, from the constructions of almost complex structures $\mathbb{J}_i^{Ch}$ on $Z$,
and the following commutative diagram,
$$\xymatrix{
  \tilde{Q} \ar[rr]^{\mathfrak{f}} \ar[dr]_{\tilde{\pi}_1}
                &  &    Q\ar[dl]^{\pi_1}    \\
                & Z                },
$$
we obtain
\begin{prop}
$\mathbb{J}_i^{Ch}$, for $i=1, 2, 3, 4$, are all conformally invariant.
\end{prop}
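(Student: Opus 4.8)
The plan is to read off everything from the transformation rule (4.4) for the Chern connection form, whose correction terms are purely diagonal, being proportional to $\delta_b^a$. Setting $a=1$ and $b=2$ makes $\delta_2^1=0$, so those corrections cancel and we are left with $\mathfrak{f}^\ast\psi_2^1=\tilde{\psi}_2^1$, that is $\mathfrak{f}^\ast\varphi^3=\tilde{\varphi}^3$ with no extra terms at all. This is precisely what distinguishes the Chern case from the Lichnerowicz case of Proposition 3.3: there the analogue (3.11) for $\phi_2^1$ carries off-diagonal corrections $\overline{f_1\tilde{\varphi}^2}-f_2\tilde{\varphi}^1$, and it is exactly these terms that destroy conformal invariance of $\mathbb{J}_2^L,\mathbb{J}_3^L,\mathbb{J}_4^L$. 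Since for the Chern connection the off-diagonal entry $\psi_2^1=\varphi^3$ transforms cleanly, I expect all four structures to be conformally invariant.

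First I would fix a local section $\tilde{u}:V\subset Z\to\tilde{Q}$ and set $u:=\mathfrak{f}\circ\tilde{u}$. Because $\pi_1\circ\mathfrak{f}=\tilde{\pi}_1$ by the commutative diagram, $u$ is a genuine local section of $\pi_1:Q\to Z$ over the same open set $V$. Pulling back through $u$ and using $\mathfrak{f}^\ast\varphi^a=e^{-f}\tilde{\varphi}^a$ together with $\mathfrak{f}^\ast\varphi^3=\tilde{\varphi}^3$, I would record
$$u^\ast\varphi^1=e^{-f}\,\tilde{u}^\ast\tilde{\varphi}^1,\qquad u^\ast\varphi^2=e^{-f}\,\tilde{u}^\ast\tilde{\varphi}^2,\qquad u^\ast\varphi^3=\tilde{u}^\ast\tilde{\varphi}^3,$$
and, since $f$ is real, the conjugate relations $u^\ast\overline{\varphi^2}=e^{-f}\,\tilde{u}^\ast\overline{\tilde{\varphi}^2}$ and $u^\ast\overline{\varphi^3}=\tilde{u}^\ast\overline{\tilde{\varphi}^3}$.

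The conclusion then follows by comparing defining coframes. For each $i=1,2,3,4$, the three $(1,0)$-forms defining $\mathbb{J}_i^{Ch}$, pulled back by $u$, coincide with the three $(1,0)$-forms defining the structure $\tilde{\mathbb{J}}_i^{Ch}$ attached to $\tilde{h}$, pulled back by $\tilde{u}$, up to multiplication of each member by a nowhere-vanishing positive function (namely $e^{-f}$ in the $\varphi^1,\varphi^2$ slots and $1$ in the $\varphi^3$ slot). Rescaling the members of a $(1,0)$-coframe by nonzero functions leaves the associated $(1,0)$-subspace of $T^\ast Z\otimes\mathbb{C}$ unchanged, hence $\mathbb{J}_i^{Ch}=\tilde{\mathbb{J}}_i^{Ch}$ on $V$; covering $Z$ by such neighbourhoods gives the result globally. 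I do not anticipate a real obstacle here: the only thing to verify carefully is that $\varphi^3=\psi_2^1$ transforms without correction---which (4.4) supplies through $\delta_2^1=0$---and that $u=\mathfrak{f}\circ\tilde{u}$ is admissible, which the diagram guarantees. No curvature or integrability input enters, the statement being purely about the pointwise span of the coframe.
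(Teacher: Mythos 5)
Your proposal is correct and follows essentially the same route as the paper: both rest on the bundle isomorphism $\mathfrak{f}(x,\tilde{u})=(x,e^f\tilde{u})$, the relation $\mathfrak{f}^\ast\varphi=e^{-f}\tilde{\varphi}$, and the transformation rule (4.4), whose purely diagonal correction terms give $\mathfrak{f}^\ast\psi_2^1=\tilde{\psi}_2^1$, whence the spans of the defining $(1,0)$-coframes are preserved. Your write-up merely makes explicit the pullback through local sections and the rescaling-of-coframe argument that the paper leaves implicit in its appeal to ``the constructions of $\mathbb{J}_i^{Ch}$ and the commutative diagram.''
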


Next, we study some metric properties of $(Z=\mathbb{P}(T^{1,0}M), \mathbb{J}_i^{Ch}, \mathbb{K}_i^{Ch}(\lambda))$. The following Theorem is included in Theorem 3.3 and Theorem 4.8 of \cite{Va2}. For the completeness of the present paper, we also give the proof.

\begin{thm}\cite{Va2} Let $(M, J, h)$ be a Hermitian surface. For the Chern connection  on $Q=U(M)$, the almost Hermitian structures $(\mathbb{J}_i^{Ch}, \mathbb{K}_i^{Ch}(\lambda))$ on the twistor space $Z=\mathbb{P}(T^{1,0}M)$ have the following properties:
\begin{enumerate}[\upshape (i)]
\item $d\mathbb{K}_1^{Ch}(\lambda)=0$ if and only if $(M, J, h)$ is a K\"{a}hler surface with constant holomorphic sectional curvature $\frac{4}{\lambda^2}$;
\item $d\mathbb{K}_2^{Ch}(\lambda)=0$ if and only if $(M, J, h)$ is a K\"{a}hler surface with constant holomorphic sectional curvature $-\frac{4}{\lambda^2}$;
\item $d\mathbb{K}_3^{Ch}(\lambda)=0$ (or equivalently, $d\mathbb{K}_4^{Ch}(\lambda)=0$) if and only if $(M, J, h)$ is a flat K\"{a}hler surface.
\end{enumerate}
\end{thm}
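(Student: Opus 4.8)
The plan is to carry out a direct moving-frames computation of $d\mathbb{K}_i^{Ch}(\lambda)$, entirely parallel to the proof of Theorem 3.4, feeding in the two features that single out the Chern connection among all Hermitian connections: its torsion is of type $(2,0)$, so $\mathbf{T}^a=\mathbf{T}_{12}^a\,\varphi^1\wedge\varphi^2$, and its curvature is of type $(1,1)$, so $\Psi_2^1=\mathbf{K}_{\bar{1}2c\bar{d}}\,\varphi^c\wedge\overline{\varphi^d}$. First I would record the three differentials. Using the structure equations (4.1), (4.2) together with the skew-Hermitian relations $\overline{\psi_1^1}=-\psi_1^1$, $\overline{\psi_2^2}=-\psi_2^2$, $\psi_1^2=-\overline{\psi_2^1}=-\overline{\varphi^3}$, I get
\begin{align*}
d\varphi^1&=-\psi_1^1\wedge\varphi^1-\varphi^3\wedge\varphi^2+\mathbf{T}_{12}^1\,\varphi^1\wedge\varphi^2,\\
d\varphi^2&=\overline{\varphi^3}\wedge\varphi^1-\psi_2^2\wedge\varphi^2+\mathbf{T}_{12}^2\,\varphi^1\wedge\varphi^2,\\
d\varphi^3&=(\psi_2^2-\psi_1^1)\wedge\varphi^3+\Psi_2^1,
\end{align*}
together with their conjugates.

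For parts (i) and (ii) I would differentiate $\mathbb{K}_1^{Ch}(\lambda)$ and $\mathbb{K}_2^{Ch}(\lambda)$ as defined in (2.21), (2.22). The diagonal one-forms $\psi_1^1,\psi_2^2$ cancel in every term (just as the $\Omega_2^1-\Omega_4^3$ contributions cancel in Section 3), and the result splits into three blocks: the two twistor-line three-forms $2\,\overline{\varphi^1}\wedge\varphi^2\wedge\varphi^3$ and $-2\,\varphi^1\wedge\overline{\varphi^2}\wedge\overline{\varphi^3}$; four torsion three-forms involving $\mathbf{T}_{12}^a,\overline{\mathbf{T}_{12}^a}$ and containing no $\varphi^3,\overline{\varphi^3}$; and the curvature block $\lambda^2(\Psi_2^1\wedge\overline{\varphi^3}-\varphi^3\wedge\overline{\Psi_2^1})$. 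The four torsion terms have distinct holomorphic/antiholomorphic index patterns, hence are linearly independent, so $d\mathbb{K}_1^{Ch}(\lambda)=0$ forces $\mathbf{T}_{12}^1=\mathbf{T}_{12}^2=0$, i.e. $(M,J,h)$ is K\"ahler. The surviving $\overline{\varphi^3}$-block then reads $\bigl(-2\,\varphi^1\wedge\overline{\varphi^2}+\lambda^2\mathbf{K}_{\bar{1}2c\bar{d}}\,\varphi^c\wedge\overline{\varphi^d}\bigr)\wedge\overline{\varphi^3}=0$, equivalent to $\lambda^2\mathbf{K}_{\bar{1}21\bar{2}}=2$ and $\mathbf{K}_{\bar{1}21\bar{1}}=\mathbf{K}_{\bar{1}22\bar{1}}=\mathbf{K}_{\bar{1}22\bar{2}}=0$ on $Q$; equivalently $\Psi_2^1=\frac{2}{\lambda^2}\varphi^1\wedge\overline{\varphi^2}$ in every unitary frame. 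Part (ii) is the same computation with the single sign change in (2.22), yielding $\lambda^2\mathbf{K}_{\bar{1}21\bar{2}}=-2$.

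For part (iii) I would repeat the computation for $\mathbb{K}_3^{Ch}(\lambda)$ and $\mathbb{K}_4^{Ch}(\lambda)$ from (2.23), (2.24). The only structural change relative to (i) is that the second summand is $\varphi^2\wedge\overline{\varphi^2}$ rather than $\overline{\varphi^2}\wedge\varphi^2$, and this is decisive: the two twistor-line three-forms now cancel against one another, so closedness becomes purely algebraic in the torsion and curvature. The torsion block again forces $\mathbf{T}_{12}^1=\mathbf{T}_{12}^2=0$ (K\"ahler), while the curvature block $\lambda^2(\Psi_2^1\wedge\overline{\varphi^3}-\varphi^3\wedge\overline{\Psi_2^1})=0$ now forces the \emph{full} vanishing $\Psi_2^1=0$, i.e. $\mathbf{K}_{\bar{1}2c\bar{d}}=0$ on $Q$. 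The form $\mathbb{K}_4^{Ch}(\lambda)$ gives the conjugate computation and the same two conditions, which explains the asserted equivalence $d\mathbb{K}_3^{Ch}(\lambda)=0\Leftrightarrow d\mathbb{K}_4^{Ch}(\lambda)=0$.

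The main obstacle is the final step: converting these frame-dependent curvature identities on $Q$ into invariant geometric statements. Because each identity must hold at \emph{every} unitary frame, I would exploit the $U(2)$-action exactly as in Theorem 3.1(i) and Theorem 3.4: writing an arbitrary frame as $(\hat{u}_1,\hat{u}_2)=(u_1,u_2)\mathfrak{a}$ with $\mathfrak{a}\in U(2)$ and expanding the conditions into polynomial identities in the entries of $\mathfrak{a}$. For (i), these identities force the K\"ahler curvature tensor $\mathbf{K}=\mathbf{R}$ into the model of a complex space form, with $\lambda^2\mathbf{K}_{\bar{1}21\bar{2}}=2$ normalizing the holomorphic sectional curvature to $\frac{4}{\lambda^2}$ (and to $-\frac{4}{\lambda^2}$ in (ii)). For (iii), the same averaging promotes $\Psi_2^1\equiv 0$ to the vanishing of the entire curvature tensor, i.e. flatness, by the frame-change argument used at the end of the proof of Theorem 3.4(iii). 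This translation, rather than the differentiation, is where the real work lies.
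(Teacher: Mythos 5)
Your proposal is correct and follows essentially the same route as the paper's proof: computing $d\mathbb{K}_i^{Ch}(\lambda)$ from the Chern structure equations, splitting into the independent torsion block (forcing the K\"ahler condition $\mathbf{T}^1=\mathbf{T}^2=0$) and the $\varphi^3,\overline{\varphi^3}$ blocks (forcing $\lambda^2\Psi_2^1=\pm 2\varphi^1\wedge\overline{\varphi^2}$ in cases (i)--(ii) and $\Psi_2^1=0$ in case (iii)), and then using the $U(2)$-frame-change argument to upgrade these frame-wise identities to constant holomorphic sectional curvature $\pm\frac{4}{\lambda^2}$, respectively flatness via the claim in Theorem 3.4(iii). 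The only cosmetic difference is that the paper phrases the final step for (iii) through the real curvature forms ($\Psi_2^1=\Omega_3^1+\sqrt{-1}\Omega_3^2$ for K\"ahler surfaces), which is exactly the argument you cite.
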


\begin{proof}
(i) For the exterior differentiation of $\mathbb{K}_1^{Ch}(\lambda)$, we have
\begin{align}\label{E:1}
d\mathbb{K}_1^{Ch}(\lambda)
                      &=\sqrt{-1}[2\overline{\varphi^1}\wedge\varphi^2\wedge\varphi^3-2\varphi^1\wedge\overline{\varphi^2}\wedge\overline{\varphi^3}\notag\\
                      &~~~~~+\mathbf{T}^1\wedge\overline{\varphi^1}-\overline{\mathbf{T}^1}\wedge\varphi^1+\overline{\mathbf{T}^2}\wedge\varphi^2\notag\\
                      &~~~~~-\mathbf{T}^2\wedge\overline{\varphi^2}
                      +\lambda^2(\Psi_2^1\wedge\overline{\varphi^3}-\overline{\Psi_2^1}\wedge\varphi^3)].
\end{align}
Thus $d\mathbb{K}_1^{Ch}(\lambda)=0$ if and only if
$$\mathbf{T}^1=\mathbf{T}^2=0,~~2\varphi^1\wedge\overline{\varphi^2}=\lambda^2\Psi_2^1$$
on $Q$.

In fact, $\mathbf{T}^1=\mathbf{T}^2=0$ is equivalent to  $(M, J, h)$ is a K\"{a}hler surface. For K\"{a}hler surface, $2\varphi^1\wedge\overline{\varphi^2}=\lambda^2\Psi_2^1$ if and only if
$$\mathbf{R}_{\bar{1}21\bar{2}}=\frac{2}{\lambda^2},~~\mathbf{R}_{\bar{1}21\bar{1}}=\mathbf{R}_{\bar{1}22\bar{2}}=\mathbf{R}_{\bar{1}22\bar{1}}=0.$$
Because of the random  of the unitary frames, the above equations imply
$$\mathbf{R}_{\bar{1}11\bar{1}}=\mathbf{R}_{\bar{2}22\bar{2}}=\frac{4}{\lambda^2}, ~~\mathbf{R}_{\bar{1}12\bar{2}}=\frac{2}{\lambda^2}.$$
Therefore, combining the above discussions, for K\"{a}hler surface, $2\varphi^1\wedge\overline{\varphi^2}=\lambda^2\Psi_2^1$ if and only if
$\mathbf{R}_{\bar{a}bc\bar{d}}=\frac{2}{\lambda^2}(\delta_{ac}\delta_{bd}+\delta_{ab}\delta_{cd})$
on $Q$, i.e. $(M, J, h)$ is a K\"{a}hler surface with constant holomorphic sectional curvature $\frac{4}{\lambda^2}$.

(ii) For the exterior differentiation of $\mathbb{K}_2^{Ch}(\lambda)$, we have
\begin{align}\label{E:1}
d\mathbb{K}_2^{Ch}(\lambda)
                      &=\sqrt{-1}[2\overline{\varphi^1}\wedge\varphi^2\wedge\varphi^3-2\varphi^1\wedge\overline{\varphi^2}\wedge\overline{\varphi^3}\notag\\
                      &~~~~~+\mathbf{T}^1\wedge\overline{\varphi^1}-\overline{\mathbf{T}^1}\wedge\varphi^1+\overline{\mathbf{T}^2}\wedge\varphi^2\notag\\
                      &~~~~~-\mathbf{T}^2\wedge\overline{\varphi^2}
                      +\lambda^2(\overline{\Psi_2^1}\wedge\varphi^3-\Psi_2^1\wedge\overline{\varphi^3})].
\end{align}
Thus $d\mathbb{K}_2^{Ch}(\lambda)=0$ if and only if
$$\mathbf{T}^1=\mathbf{T}^2=0,~~2\varphi^1\wedge\overline{\varphi^2}+\lambda^2\Psi_2^1=0$$
on $Q$.

Therefore, the subsequent proof is the same as in (i).

(iii) For the exterior differentiation of $\mathbb{K}_3^{Ch}(\lambda)$ and $\mathbb{K}_3^{Ch}(\lambda)$, we have
\begin{align}\label{E:1}
d\mathbb{K}_3^{Ch}(\lambda)
                      &=\sqrt{-1}[\mathbf{T}^1\wedge\overline{\varphi^1}-\overline{\mathbf{T}^1}\wedge\varphi^1+\mathbf{T}^2\wedge\overline{\varphi^2}\notag\\
                      &~~~~~-\overline{\mathbf{T}^2}\wedge\varphi^2
                      +\lambda^2(\Psi_2^1\wedge\overline{\varphi^3}-\overline{\Psi_2^1}\wedge\varphi^3)],
\end{align}
\begin{align}\label{E:1}
d\mathbb{K}_4^{Ch}(\lambda)
                      &=\sqrt{-1}[\mathbf{T}^1\wedge\overline{\varphi^1}-\overline{\mathbf{T}^1}\wedge\varphi^1+\mathbf{T}^2\wedge\overline{\varphi^2}\notag\\
                      &~~~~~-\overline{\mathbf{T}^2}\wedge\varphi^2
                      +\lambda^2(\overline{\Psi_2^1}\wedge\varphi^3-\Psi_2^1\wedge\overline{\varphi^3})].
\end{align}
Thus $d\mathbb{K}_3^{Ch}(\lambda)=0$ (equivalently, $d\mathbb{K}_4^{Ch}(\lambda)=0$) if and only if
$$\mathbf{T}^1=\mathbf{T}^2=0,~~\Psi_2^1=0$$
on $Q$.

Obviously, $\mathbf{T}^1=\mathbf{T}^2=0$ is equivalent to  $(M, J, h)$ is a K\"{a}hler surface.
For K\"{a}hler surface, the curvature forms on $Q$ satisfy $\Omega_3^1=\Omega_4^2$, $\Omega_4^1=-\Omega_3^2$, and then
$\Psi_2^1=\Omega_3^1+\sqrt{-1}\Omega_3^2$. Thus $\Psi_2^1=0$ if and only if $\Omega_3^1=\Omega_3^2=\Omega_4^1=\Omega_4^2=0$. We have a claim in the proof of Theorem 3.4 (iii) that $\Omega_3^1=\Omega_4^2=\Omega_4^1=\Omega_3^2=0$ on principal bundle $Q$ if and only if $(M, J, h)$ is flat. So we obtain the result.

\end{proof}

\begin{thm} Let $(M, J, h)$ be a Hermitian surface. For the Chern connection  on $Q=U(M)$, the almost Hermitian structures $(\mathbb{J}_i^{Ch}, \mathbb{K}_i^{Ch}(\lambda))$ on the twistor space $Z=\mathbb{P}(T^{1,0}M)$ have the following properties:
\begin{enumerate}[\upshape (i)]
\item $d\mathbb{K}_1^{Ch}(\lambda)\wedge\mathbb{K}_1^{Ch}(\lambda)=0$ (or equivalently, $d\mathbb{K}_2^{Ch}(\lambda)\wedge\mathbb{K}_2^{Ch}(\lambda)=0$) if and only if $(M, J, h)$ is a self-dual K\"{a}hler surface;
\item $d\mathbb{K}_3^{Ch}(\lambda)\wedge\mathbb{K}_3^{Ch}(\lambda)=0$ (or equivalently, $d\mathbb{K}_4^{Ch}(\lambda)\wedge\mathbb{K}_4^{Ch}(\lambda)=0$) if and only if $(M, J, h)$ is a K\"{a}hler Einstein surface.
\end{enumerate}
\end{thm}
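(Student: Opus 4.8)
The plan is to imitate the proof of Theorem 3.5, now starting from the exterior derivative formulas (4.5)--(4.8) for the Chern connection. For each $i$ I would form the $5$-form $\mathbb{K}_i^{Ch}(\lambda)\wedge d\mathbb{K}_i^{Ch}(\lambda)$ on the six-dimensional manifold $Z$ and expand it in the basis of $5$-forms obtained from the top form $\varphi^1\wedge\overline{\varphi^1}\wedge\varphi^2\wedge\overline{\varphi^2}\wedge\varphi^3\wedge\overline{\varphi^3}$ by deleting exactly one factor. Since $\mathbb{K}_i^{Ch}(\lambda)$ is a sum of the three monomials $\varphi^a\wedge\overline{\varphi^a}$ (up to the interchanges defining $\mathbb{K}_2^{Ch}$ and $\mathbb{K}_4^{Ch}$), wedging annihilates every summand of $d\mathbb{K}_i^{Ch}(\lambda)$ that already contains a repeated factor. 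In particular the two cubic terms $2\overline{\varphi^1}\wedge\varphi^2\wedge\varphi^3$ and $-2\varphi^1\wedge\overline{\varphi^2}\wedge\overline{\varphi^3}$ present in (4.5)--(4.6) drop out entirely, and only the torsion terms $\mathbf{T}^a$ and the curvature term $\Psi_2^1$ can survive.

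The key structural observation I would isolate next is that these survivors occupy pairwise distinct $5$-form components. Writing $\mathbf{T}^a=\mathbf{T}_{12}^a\varphi^1\wedge\varphi^2$ and $\Psi_2^1=\mathbf{K}_{\bar{1}2c\bar{d}}\varphi^c\wedge\overline{\varphi^d}$, one checks that the four torsion contributions land in the four slots missing $\overline{\varphi^2},\varphi^2,\varphi^1,\overline{\varphi^1}$ respectively, while the only surviving curvature contributions come from the diagonal coefficients $\mathbf{K}_{\bar{1}21\bar{1}}$ and $\mathbf{K}_{\bar{1}22\bar{2}}$ and land in the two slots missing $\varphi^3$ and $\overline{\varphi^3}$ (the off-diagonal $\mathbf{K}_{\bar{1}21\bar{2}},\mathbf{K}_{\bar{1}22\bar{1}}$ always meet a repeated factor and vanish). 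Because torsion and curvature never compete in the same slot, the balanced equation $\mathbb{K}_i^{Ch}(\lambda)\wedge d\mathbb{K}_i^{Ch}(\lambda)=0$ splits into $\mathbf{T}_{12}^1=\mathbf{T}_{12}^2=0$ together with a single curvature relation: for $i=1$ (and identically for $i=2$) the relation is $\mathbf{K}_{\bar{1}21\bar{1}}-\mathbf{K}_{\bar{1}22\bar{2}}=0$, whereas the sign pattern of $\mathbb{K}_3^{Ch}(\lambda)$ turns this for $i=3$ (and identically for $i=4$) into the sum $\mathbf{K}_{\bar{1}21\bar{1}}+\mathbf{K}_{\bar{1}22\bar{2}}=0$.

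It then remains to read off the geometry. The vanishing of the Chern torsion $\mathbf{T}^1=\mathbf{T}^2=0$ on $Q$ is equivalent to $(M,J,h)$ being K\"{a}hler, exactly as in the proof of Theorem 4.4. Once the metric is K\"{a}hler, the Lee form is zero and the curvature relation (2.18) collapses to $\mathbf{K}_{\bar{a}bc\bar{d}}=\mathbf{R}_{\bar{a}bc\bar{d}}$, so the surviving curvature equation becomes precisely the Levi-Civita condition already analysed in Theorem 3.5. For $i=1,2$ the equation $\mathbf{R}_{\bar{1}21\bar{1}}-\mathbf{R}_{\bar{1}22\bar{2}}=0$ on $Q$ is equivalent to self-duality (the claim proved in Theorem 3.5(i)), giving a self-dual K\"{a}hler surface. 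For $i=3,4$ the equation $\mathbf{R}_{\bar{1}21\bar{1}}+\mathbf{R}_{\bar{1}22\bar{2}}=0$ on $Q$, combined with the $J$-invariance of the Ricci tensor of a K\"{a}hler surface, forces the Ricci tensor to be proportional to the metric by the frame-rotation argument in Theorem 3.5(ii), i.e. $(M,J,h)$ is Einstein; this yields a K\"{a}hler-Einstein surface. Both converses follow by reversing these steps.

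I expect the main obstacle to be essentially bookkeeping: correctly tracking the orientation signs when each surviving monomial is reordered into the standard $5$-form, and verifying rigorously that the torsion and curvature contributions decouple into disjoint components, so that no cancellation between them can occur to weaken the conditions. The one genuinely conceptual point is the reduction $\mathbf{K}=\mathbf{R}$ in the K\"{a}hler case, which is what allows the entire curvature analysis to be imported verbatim from Theorem 3.5 rather than redone for the Chern curvature tensor.
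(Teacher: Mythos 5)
Your proposal is correct and follows essentially the same route as the paper: compute $\mathbb{K}_i^{Ch}(\lambda)\wedge d\mathbb{K}_i^{Ch}(\lambda)$ from (4.5)--(4.8), observe that the balanced condition splits into the vanishing of the Chern torsion (equivalently, $(M,J,h)$ K\"ahler) plus the single curvature relation $\mathbf{K}_{\bar{1}21\bar{1}}\mp\mathbf{K}_{\bar{1}22\bar{2}}=0$, and then import the analysis of Theorem 3.5 after identifying Chern and Riemannian curvatures in the K\"ahler case. The only cosmetic difference is that you justify $\mathbf{K}=\mathbf{R}$ via the relation (2.18) with vanishing Lee form, whereas the paper simply notes that for a K\"ahler surface the Chern connection coincides with the Levi-Civita connection; these are the same fact.
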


\begin{proof}
(i) From the exterior differential formula (4.5), we have
\begin{align}\label{E:1}
-\mathbb{K}_1^{Ch}(\lambda)\wedge d\mathbb{K}_1^{Ch}(\lambda)
                      &=\lambda^2[(\varphi^1\wedge\overline{\varphi^1}+\overline{\varphi^2}\wedge\varphi^2)\wedge \Psi_2^1\wedge\overline{\varphi^3}\notag\\
                      &~~~~-(\varphi^1\wedge\overline{\varphi^1}+\overline{\varphi^2}\wedge\varphi^2)\wedge \overline{\Psi_2^1}\wedge\varphi^3\notag\\
                      &~~~~+(\mathbf{T}^1\wedge\overline{\varphi^1}-\overline{\mathbf{T}^1}\wedge\varphi^1)\wedge \varphi^3\wedge\overline{\varphi^3}\notag\\
                      &~~~~+(\overline{\mathbf{T}^2}\wedge\varphi^2
                     -\mathbf{T}^2\wedge\overline{\varphi^2})\wedge \varphi^3\wedge\overline{\varphi^3}].
\end{align}

From the exterior differential formula (4.6), we have
\begin{align}\label{E:1}
-\mathbb{K}_2^{Ch}(\lambda)\wedge d\mathbb{K}_2^{Ch}(\lambda)
                      &=\lambda^2[(\varphi^1\wedge\overline{\varphi^1}+\overline{\varphi^2}\wedge\varphi^2)\wedge \overline{\Psi_2^1}\wedge\varphi^3\notag\\
                      &~~~~-(\varphi^1\wedge\overline{\varphi^1}+\overline{\varphi^2}\wedge\varphi^2)\wedge\Psi_2^1\wedge\overline{\varphi^3}\notag\\
                      &~~~~+(\mathbf{T}^1\wedge\overline{\varphi^1}-\overline{\mathbf{T}^1}\wedge\varphi^1)\wedge \overline{\varphi^3}\wedge\varphi^3\notag\\
                      &~~~~+(\overline{\mathbf{T}^2}\wedge\varphi^2
                     -\mathbf{T}^2\wedge\overline{\varphi^2})\wedge \overline{\varphi^3}\wedge\varphi^3].
\end{align}

It follows that $d\mathbb{K}_1^{Ch}(\lambda)\wedge\mathbb{K}_1^{Ch}(\lambda)=0$ (or equivalently,
$d\mathbb{K}_2^{Ch}(\lambda)\wedge\mathbb{K}_2^{Ch}(\lambda)=0$) if and only if
$\mathbf{T}^1=\mathbf{T}^2=0$ and $\mathbf{K}_{\bar{1}22\bar{2}}=\mathbf{K}_{\bar{1}21\bar{1}}$ on $Q$.
As we have observed that $\mathbf{T}^1=\mathbf{T}^2=0$ if and only if $(M, J, h)$ is a K\"{a}hler surface. For K\"{a}hler surface,
the Chern connection coincides with the Levi-Civita connection, so $\mathbf{K}_{\bar{1}22\bar{2}}=\mathbf{K}_{\bar{1}21\bar{1}}$  if and only if
$\mathbf{R}_{\bar{1}22\bar{2}}=\mathbf{R}_{\bar{1}21\bar{1}}$  on $Q$. Now, the result follows from the proof of Theorem 3.5 (i).

(ii) From the exterior differential formula (4.7), we have
\begin{align}\label{E:1}
-\mathbb{K}_3^{Ch}(\lambda)\wedge d\mathbb{K}_3^{Ch}(\lambda)
                      &=\lambda^2[(\varphi^1\wedge\overline{\varphi^1}+\varphi^2\wedge\overline{\varphi^2})\wedge \Psi_2^1\wedge\overline{\varphi^3}\notag\\
                      &~~~~-(\varphi^1\wedge\overline{\varphi^1}+\varphi^2\wedge\overline{\varphi^2})\wedge\overline{\Psi_2^1}\wedge\varphi^3\notag\\
                      &~~~~+(\mathbf{T}^1\wedge\overline{\varphi^1}-\overline{\mathbf{T}^1}\wedge\varphi^1)\wedge \varphi^3\wedge\overline{\varphi^3}\notag\\
                      &~~~~+(\mathbf{T}^2\wedge\overline{\varphi^2}
                     -\overline{\mathbf{T}^2}\wedge\varphi^2)\wedge \varphi^3\wedge\overline{\varphi^3}].
\end{align}

From the exterior differential formula (4.8), we have
\begin{align}\label{E:1}
-\mathbb{K}_4^{Ch}(\lambda)\wedge d\mathbb{K}_4^{Ch}(\lambda)
                      &=\lambda^2[(\varphi^1\wedge\overline{\varphi^1}+\varphi^2\wedge\overline{\varphi^2})\wedge \overline{\Psi_2^1}\wedge\varphi^3\notag\\
                      &~~~~-(\varphi^1\wedge\overline{\varphi^1}+\varphi^2\wedge\overline{\varphi^2})\wedge\Psi_2^1\wedge\overline{\varphi^3}\notag\\
                      &~~~~+(\mathbf{T}^1\wedge\overline{\varphi^1}-\overline{\mathbf{T}^1}\wedge\varphi^1)\wedge \overline{\varphi^3}\wedge\varphi^3\notag\\
                      &~~~~+(\mathbf{T}^2\wedge\overline{\varphi^2}
                      -\overline{\mathbf{T}^2}\wedge\varphi^2)\wedge \overline{\varphi^3}\wedge\varphi^3].
\end{align}

It follows that $d\mathbb{K}_3^{Ch}(\lambda)\wedge\mathbb{K}_3^{Ch}(\lambda)=0$ (or equivalently,
$d\mathbb{K}_4^{Ch}(\lambda)\wedge\mathbb{K}_4^{Ch}(\lambda)=0$) if and only if
$\mathbf{T}^1=\mathbf{T}^2=0$ and $\mathbf{K}_{\bar{1}22\bar{2}}+\mathbf{K}_{\bar{1}21\bar{1}}=0$ on $Q$.
As we have observed that $\mathbf{T}^1=\mathbf{T}^2=0$ if and only if $(M, J, h)$ is a K\"{a}hler surface. For K\"{a}hler surface,
the Chern connection coincides with the Levi-Civita connection, so $\mathbf{K}_{\bar{1}22\bar{2}}+\mathbf{K}_{\bar{1}21\bar{1}}=0$  if and only if
$\mathbf{R}_{\bar{1}22\bar{2}}+\mathbf{R}_{\bar{1}21\bar{1}}=0$  on $Q$. Therefore, the result follows from the proof of Theorem 3.5 (ii).

\end{proof}

\begin{rem}
The metric condition on $(Z=\mathbb{P}(T^{1,0}M)$, $\mathbb{J}_i^L, \mathbb{K}_i^L(\lambda))$ in Theorem 3.5 and on $(Z=\mathbb{P}(T^{1,0}M)$, $\mathbb{J}_i^{Ch}, \mathbb{K}_i^{Ch}(\lambda))$ in Theorem 4.5 is called the balanced metric condition. M. L. Michelsohn \cite{Mic} introduced the balanced metric on complex manifold, and also claimed that the natural metric on the twistor space $(Z,\mathbb{J}_+)$ of a self-dual 4-manifold is a balanced metric. This metric is systematically studied in \cite{JR,FZ} by using the moving frames method.
\end{rem}

\begin{rem}
For a K\"{a}hler surface $(M, J, h)$, the canonical Hermitian metrics $\{D^t\}$ degenerate to a single point, the Levi-Civita connection. The induced natural geometry structures  on $Z=\mathbb{P}(T^{1,0}M)$ are denoted by $(\mathbb{J}_i^{LC},\mathbb{K}_i^{LC}(\lambda))$, $i=1,2,3,4$. In this case, $\mathbb{J}_1^{LC}=\mathbb{J}_+$ and $\mathbb{J}_2^{LC}=\mathbb{J}_-$. I. Vaisman\cite{Va2} showed that for a K\"{a}hler surface $(M, J, h)$ with constant holomorphic sectional curvature, the torsion 1-forms of the corresponding
metrics $\mathbb{K}_i^{LC}(\lambda)$ on the twistor space are zero, that is all $\mathbb{K}_i^{LC}(\lambda)$ satisfy the balanced metric condition.
\end{rem}

At the end of this section, we show the following $\sqrt{-1}\partial\bar{\partial}$-formulas of  $\mathbb{K}_3^{Ch}(\lambda)$ and $\mathbb{K}_4^{Ch}(\lambda)$:
\begin{eqnarray*}
&&\sqrt{-1}\partial\bar{\partial}\mathbb{K}_3^{Ch}(\lambda)=
\sqrt{-1}\partial\bar{\partial}\mathbb{K}_4^{Ch}(\lambda)\\
&&~~~~~~~~~~~~~~~~~~~~~~~~~=-\lambda^2[\Psi_2^1\wedge\overline{\Psi_2^1}+(\Psi_1^1-\Psi_2^2)\wedge\varphi^3\wedge\overline{\varphi^3}]\\
&&~~~~~~~~~~~~~~~~~~~~~~~~~~~~~+\Psi_1^1\wedge\varphi^1\wedge\overline{\varphi^1}+\Psi_2^2\wedge\varphi^2\wedge\overline{\varphi^2}
+\mathbf{T}^1\wedge\overline{\mathbf{T}^1}\\
&&~~~~~~~~~~~~~~~~~~~~~~~~~~~~~-\overline{\Psi_2^1}\wedge\varphi^1\wedge\overline{\varphi^2}-\Psi_2^1\wedge\overline{\varphi^1}\wedge\varphi^2
+\mathbf{T}^2\wedge\overline{\mathbf{T}^2}.
\end{eqnarray*}

\section{Conclusions and discussions}
By using the method of moving frames, we give a comprehensive study of the four natural almost Hermitian
structures on the twistor space $Z=\mathbb{P}(T^{1,0}M)$ associated with the Lichnerowicz connection and the Chern connection on a  Hermitian surface $(M, J, h)$, respectively. For the Lichnerowicz connection, the induced natural almost Hermitian
structures on the twistor space $Z=\mathbb{P}(T^{1,0}M)$ are denoted by $(\mathbb{J}_i^L, \mathbb{K}_i^L(\lambda))$, $i=1,2,3,4$. We consider the integrability and conformal property of $\mathbb{J}_i^L$.  In particular, we prove that $\mathbb{J}_3^L$ (or $\mathbb{J}_4^L$) is integrable if and only if the Ricci tensor of the Levi-Civita connection on  $(M, J, h)$ is $J$-invariant. The symplectic metric condition and the balanced metric condition of the natural almost Hermitian metrics $\mathbb{K}_i^L(\lambda)$ on $Z=\mathbb{P}(T^{1,0}M)$ are studied. For some special Hermitian surface $(M, J, h)$, we show the $\sqrt{-1}\partial\bar{\partial}$-formulas of $\mathbb{K}_1^L(\lambda)$, $\mathbb{K}_3^L(\lambda)$, $\mathbb{K}_4^L(\lambda)$.  For the Chern connection, the induced natural almost Hermitian structures on the twistor space $Z=\mathbb{P}(T^{1,0}M)$ are denoted by $(\mathbb{J}_i^{Ch}, \mathbb{K}_i^{Ch}(\lambda))$, $i=1,2,3,4$. We prove that $\mathbb{J}_1^{Ch}=\mathbb{J}_1^L$, and all $\mathbb{J}_i^{Ch}$ are conformally invariant. We also consider the symplectic metric condition and the balanced metric condition of the natural almost Hermitian metrics $\mathbb{K}_i^{Ch}(\lambda)$ on $Z=\mathbb{P}(T^{1,0}M)$.

In fact, as we show in section 2, associated with any unitary connection on a Hermitian surface $(M, J, h)$, we can define 3-parameter family of almost Hermitian structures, denoted by $\mathbb{K}_i(\lambda_1,\lambda_2,\lambda_3)$,  on the twistor space  $Z=\mathbb{P}(T^{1,0}M)$. Thus, for the Lichnerowicz connection and the Chern connection, we obtain many almost Hermitian structures, denoted by  $(\mathbb{J}_i^L, \mathbb{K}_i^L(\lambda_1,\lambda_2,\lambda_3))$ and $(\mathbb{J}_i^{Ch}, \mathbb{K}_i^{Ch}(\lambda_1,\lambda_2,\lambda_3))$ on $Z=\mathbb{P}(T^{1,0}M)$. These may lead to more results. For example, Proposition 3.7 in \cite{Va2} includes in these family of almost Hermitian structures. The similar constructions are in details studied in the appendix for the twistor space of complex projective
plane $\mathbb{C}P^2$ with the Fubini-Study metric $h_{FS}$. We will present the curvature properties of the constructed almost Hermitian structures on the twistor space $Z=\mathbb{P}(T^{1,0}M)$, and the generalization of these structures to the twistor spaces of higher dimensional manifold $M$ (in particular, $M$ is a six dimensional (almost) complex manifold) in our forthcoming papers.

The following two points are related discussions.

1. We want to point out that $Z=\mathbb{P}(T^{1,0}M)$ has a natural Hermitian structure induced from the Hermitian structure on the holomorphic
tangent bundle $T^{1,0}M$ of Hermitian surface $(M, J, h)$. We review the general constructions of natural geometry structures on projective bundles\cite{Voi}.
Let $(M, J, h)$ be a Hermitian manifold, the corresponding fundamental 2-form is denoted by $F$. Let $(E, h_E)$ be a holomorphic vector bundle with a Hermitian structure $h_E$ over $(M, J, h)$. $\mathbb{P}(E)$ is the projective bundle of $E$, the natural projection is denoted by $p: \mathbb{P}(E)\rightarrow M$. Namely, for any point $x\in M$, the fibre $\mathbb{P}(E)_x$ is the projective space $\mathbb{P}(E_x)$ of the fibre $E_x$. Then  $\mathbb{P}(E)$ has a natural complex structure,  meanwhile, $h$ and $h_E$ induce a positive $(1,1)$-form, denoted by $F_{\mathbb{P}(E)}$, on $\mathbb{P}(E)$,
\begin{equation}
F_{\mathbb{P}(E)}=\lambda p^\ast F+\sqrt{-1}\partial\bar\partial\log h_E(v,v),
\end{equation}
where $(x,[v])\in \mathbb{P}(E)$, $\lambda$ is a sufficiently large number.

It is easy to see that if $F$ is a K\"{a}hler metric on $M$, then the above $F_{\mathbb{P}(E)}$ is a K\"{a}hler metric on $\mathbb{P}(E)$. Of course, $E=T^{1,0}M$ is a particular case, and $\mathbb{P}(T^{1,0}M)$ is regarded as the twistor space $Z$ in the present paper. From the results in sections 3 and 4, our almost Hermitian structures on $\mathbb{P}(T^{1,0}M)$ are different from the above construction (5.1).

2. It is worth to mention the results of compatible almost complex structures on the twistor space $Z$ obtained by G. Deschamps\cite{Des}. As introduced in section 1, for an oriented Riemannian 4-manifold $(M, h)$, the Levi-Civita connection on $M$ induces a splitting of the tangent bundle $TZ$ into the direct sum of the horizontal and vertical distributions, denoted by $TZ=\mathcal{H}\oplus \mathcal{V}$. With respect to this decomposition, G. Deschamps\cite{Des} constructed almost complex structures $\mathbb{J}_\Phi$ on $Z$, by using the fibre preserving morphisms $\Phi$:
$$\xymatrix{
  Z \ar[rr]^{\Phi} \ar[dr]_{\pi_2}
                &  &    Z \ar[dl]^{\pi_2}    \\
                & M                 }.
$$
In particular, $\mathbb{J}_{Id}=\mathbb{J}_+$, $\mathbb{J}_{\sigma}=-\mathbb{J}_-$, where $Id$ is the identity morphism of the twistor space $Z$,  $\sigma$ is the morphism of $Z$ whose restriction to the fibre of the twistor fibration is the antipodal map. For a Hermitian surface $(M, J, h)$,  G. Deschamps\cite{Des} introduced another natural almost complex structure $\mathbb{J}_\infty$, and proved that $\mathbb{J}_\infty$ is integrable if and only if $(M, J, h)$ is a K\"{a}hler surface, in the compact case.  Recently, D. Ali, J. Davidov and O. Muskarov\cite{ADM} considered the Gray-Hervella classes of the natural almost Hermitian structures on the twistor space $Z$ introduced by G. Deschamps.

Our viewpoint of almost Hermitian structures on the twistor space is from the canonical connections on the principal bundle $Q=U(M)$. On the other hand, G. Deschamps's construction is from  the fibre preserving morphism of the twistor space. It is an interesting question to study further the similarities and differences of these two viewpoints. Chern numbers \cite{KT} may be the candidates for further study.

\section{Appendix}

In order to have a better understanding of this paper, we make some explicit calculations for the case of complex projective
plane $\mathbb{C}P^2$ with the Fubini-Study metric $h_{FS}$. It is well-known that $(\mathbb{C}P^2, h_{FS})$ is K\"{a}hler-Einstein and self-dual.
The following results are familiar to expert\cite{BH,ES,Yan}. Using the notations as in section 2, we have projection
$$\pi_2: Z=\mathbb{P}(T^{1,0}\mathbb{C}P^2)=\frac{SU(3)}{S(U(1)\times U(1)\times U(1))}\rightarrow \mathbb{C}P^2=\frac{SU(3)}{S(U(1)\times U(2))}.$$

By general principles, $Z=\mathbb{P}(T^{1,0}\mathbb{C}P^2)$ admits $2^3=8$ invariant almost complex structures and exactly $3!=6$ of these are integrable.

The Maurer-Cartan form of $SU(3)$ is denoted by $w=g^{-1}dg=(w_m^l)_{l,m=1,2,3}$, $g\in SU(3)$. $w$ is a $su(3)$-valued 1-form. The exterior differentiation of
$w$ leads to the so called structure equations of $SU(3)$,
$$dw=-w\wedge w.$$
By local section of $\pi: SU(3)\rightarrow \mathbb{C}P^2$, the pull-back of $\{w_2^1, w_3^1\}$ form a local unitary coframe on $(\mathbb{C}P^2, h_{FS})$. Next, we make the calculations on $SU(3)$. From the structure equations, we have
\begin{equation}
dw_2^1=-(w_1^1-w_2^2)\wedge w_2^1-\overline{w_3^2}\wedge w_3^1,
\end{equation}
\begin{equation}
dw_3^1=-(w_1^1-w_3^3)\wedge w_3^1+w_3^2\wedge w_2^1,
\end{equation}
\begin{equation}
dw_3^2=-(w_2^2-w_3^3)\wedge w_3^2+\overline {w_2^1}\wedge w_3^1.
\end{equation}

Set
$$\phi=\left(\begin{array}{cc}w_1^1-w_2^2&\overline{w_3^2}\\-w_3^2&w_1^1-w_3^3
\end{array}\right).$$
Using the notations as in section 2, we define eight natural almost complex structures on $Z=\mathbb{P}(T^{1,0}\mathbb{C}P^2)$ as follows.

~~~~~~~~~~~~~~~~~~~~~~~~~~~~~$\mathbb{J}_1$:~~a basis of $(1, 0)$-forms is $\{w_2^1, \overline{w_3^1}, \overline{w_3^2}\}$;

~~~~~~~~~~~~~~~~~~~~~~~~~~~~~$\mathbb{J}_2$:~~a basis of $(1, 0)$-forms is $\{w_2^1, \overline{w_3^1},  w_3^2\}$;

~~~~~~~~~~~~~~~~~~~~~~~~~~~~~$\mathbb{J}_3$:~~a basis of $(1, 0)$-forms is $\{w_2^1,  w_3^1, \overline{w_3^2}\}$;

~~~~~~~~~~~~~~~~~~~~~~~~~~~~~$\mathbb{J}_4$:~~a basis of $(1, 0)$-forms is $\{w_2^1,  w_3^1,  w_3^2\}$,

\noindent and their conjugations $\mathbb{J}_5:=-\mathbb{J}_1$, $\mathbb{J}_6:=-\mathbb{J}_2$,
$\mathbb{J}_7:=-\mathbb{J}_3$, $\mathbb{J}_8:=-\mathbb{J}_4$. We only need to consider $\mathbb{J}_i$, $i=1, 2, 3, 4$, and the
natural fundamental 2-forms are
\begin{eqnarray*}
&&\mathbb{K}_1(\lambda)=\sqrt{-1}(w_2^1\wedge\overline{w_2^1}+\overline{w_3^1}\wedge w_3^1+\lambda^2\overline{w_3^2}\wedge w_3^2),\\
&&\mathbb{K}_2(\lambda)=\sqrt{-1}(w_2^1\wedge\overline{w_2^1}+\overline{w_3^1}\wedge w_3^1+\lambda^2w_3^2\wedge \overline{w_3^2}),\\
&&\mathbb{K}_3(\lambda)=\sqrt{-1}(w_2^1\wedge\overline{w_2^1}+w_3^1\wedge \overline{w_3^1}+\lambda^2\overline{w_3^2}\wedge w_3^2),\\
&&\mathbb{K}_4(\lambda)=\sqrt{-1}(w_2^1\wedge\overline{w_2^1}+w_3^1\wedge \overline{w_3^1}+\lambda^2w_3^2\wedge \overline{w_3^2}).
\end{eqnarray*}

Now, from the equations (6.1), (6.2) and (6.3), obviously, $\mathbb{J}_1$, $\mathbb{J}_3$ and $\mathbb{J}_4$ are integrable, $\mathbb{J}_2$ is not integrable. For
the first exterior differentiation of $\mathbb{K}_i(\lambda)$, we obtain
\begin{eqnarray*}
&&d\mathbb{K}_1(\lambda)=\sqrt{-1}(2-\lambda^2)(\overline{w_2^1}\wedge w_3^1\wedge\overline{ w_3^2}-w_2^1\wedge \overline{w_3^1}\wedge w_3^2),\\
&&d\mathbb{K}_2(\lambda)=\sqrt{-1}(2+\lambda^2)(\overline{w_2^1}\wedge w_3^1\wedge\overline{ w_3^2}-w_2^1\wedge \overline{w_3^1}\wedge w_3^2),\\
&&d\mathbb{K}_3(\lambda)=\sqrt{-1}\lambda^2(w_2^1\wedge \overline{w_3^1}\wedge w_3^2-\overline{w_2^1}\wedge w_3^1\wedge\overline{ w_3^2}),\\
&&d\mathbb{K}_4(\lambda)=\sqrt{-1}\lambda^2(\overline{w_2^1}\wedge w_3^1\wedge\overline{ w_3^2}-w_2^1\wedge \overline{w_3^1}\wedge w_3^2).
\end{eqnarray*}

\noindent Thus $d\mathbb{K}_1(\lambda)=0$ if and only if $\lambda^2=2$; $d\mathbb{K}_2(\lambda)^{(1,2)}=0$ (i.e. $\mathbb{K}_2(\lambda)$ is $(1, 2)$-symplectic, but not symplectic); $\mathbb{K}_3(\lambda)$ and $\mathbb{K}_4(\lambda)$ are not $d$-closed.

From the above exterior differential
formulas of $\mathbb{K}_i(\lambda)$, it also follows that $\mathbb{K}_i(\lambda)$  all satisfy the balanced metric condition, i.e.
$d\mathbb{K}_i(\lambda)\wedge \mathbb{K}_i(\lambda)=0$, $i=1, 2, 3, 4$.

By direct calculations, we have the following $\sqrt{-1}\partial\bar{\partial}$-formulas of $\mathbb{K}_1(\lambda)$, $\mathbb{K}_3(\lambda)$, $\mathbb{K}_4(\lambda)$:
\begin{eqnarray*}
&&\sqrt{-1}\partial\bar{\partial}\mathbb{K}_1(\lambda)=(\sqrt{-1})^2(2-\lambda^2)(-w_2^1\wedge\overline{w_2^1}\wedge \overline{w_3^1}\wedge w_3^1\\
&&~~~~~~~~~~~~~~~~~~~~~~~~~~~~+\overline{w_3^1}\wedge w_3^1\wedge\overline{w_3^2}\wedge w_3^2+\overline{w_3^2}\wedge w_3^2\wedge w_2^1\wedge\overline{w_2^1}),\\
&&\sqrt{-1}\partial\bar{\partial}\mathbb{K}_3(\lambda)=\sqrt{-1}\partial\bar{\partial}\mathbb{K}_4(\lambda)\\
&&~~~~~~~~~~~~~~~~~~~~~~~=(\sqrt{-1})^2\lambda^2
(w_2^1\wedge\overline{w_2^1}\wedge w_3^1\wedge\overline{w_3^1}-w_3^1\wedge\overline{w_3^1}\wedge\overline{w_3^2}\wedge w_3^2\\
&&~~~~~~~~~~~~~~~~~~~~~~~~~~~~+\overline{w_3^2}\wedge w_3^2\wedge w_2^1\wedge\overline{w_2^1}).
\end{eqnarray*}

In fact, the pull-back of $w_2^1\wedge\overline{w_2^1}$, $w_3^1\wedge\overline{w_3^1}$ and  $w_3^2\wedge\overline{w_3^2}$ are globally defined 2-forms
on $Z=\mathbb{P}(T^{1,0}\mathbb{C}P^2)$. Therefore, we can consider  3-parameter family of natural (almost) Hermitian metrics on $Z$ as follows:
\begin{eqnarray*}
&&\mathbb{K}_1(\lambda_1, \lambda_2, \lambda_3)=\sqrt{-1}
(\lambda_1^2 w_2^1\wedge\overline{w_2^1}+\lambda_2^2\overline{w_3^1}\wedge w_3^1+\lambda_3^2\overline{w_3^2}\wedge w_3^2),\\
&&\mathbb{K}_2(\lambda_1, \lambda_2, \lambda_3)=\sqrt{-1}
(\lambda_1^2 w_2^1\wedge\overline{w_2^1}+\lambda_2^2\overline{w_3^1}\wedge w_3^1+\lambda_3^2 w_3^2\wedge\overline{w_3^2}),\\
&&\mathbb{K}_3(\lambda_1, \lambda_2, \lambda_3)=\sqrt{-1}
(\lambda_1^2 w_2^1\wedge\overline{w_2^1}+\lambda_2^2 w_3^1\wedge\overline{w_3^1}+\lambda_3^2 \overline{w_3^2}\wedge w_3^2),\\
&&\mathbb{K}_4(\lambda_1, \lambda_2, \lambda_3)=\sqrt{-1}
((\lambda_1^2 w_2^1\wedge\overline{w_2^1}+\lambda_2^2 w_3^1\wedge\overline{w_3^1}+\lambda_3^2 w_3^2\wedge\overline{w_3^2}).
\end{eqnarray*}
where three parameters $\lambda_1>0$, $\lambda_2>0$, $\lambda_3>0$. As in $\mathbb{K}_i(\lambda)$, for the first exterior differentiation  of
$\mathbb{K}_i(\lambda_1, \lambda_2, \lambda_3)$, we have
\begin{eqnarray*}
&&d\mathbb{K}_1(\lambda_1, \lambda_2, \lambda_3)
=\sqrt{-1}(\lambda_1^2+\lambda_2^2-\lambda_3^2)(\overline{w_2^1}\wedge w_3^1\wedge\overline{ w_3^2}-w_2^1\wedge \overline{w_3^1}\wedge w_3^2),\\
&&d\mathbb{K}_2(\lambda_1, \lambda_2, \lambda_3)
=\sqrt{-1}(\lambda_1^2+\lambda_2^2+\lambda_3^2)(\overline{w_2^1}\wedge w_3^1\wedge\overline{ w_3^2}-w_2^1\wedge \overline{w_3^1}\wedge w_3^2),\\
&&d\mathbb{K}_3(\lambda_1, \lambda_2, \lambda_3)
=\sqrt{-1}(\lambda_1^2-\lambda_2^2-\lambda_3^2)(\overline{w_2^1}\wedge w_3^1\wedge\overline{ w_3^2}-w_2^1\wedge \overline{w_3^1}\wedge w_3^2),\\
&&d\mathbb{K}_4(\lambda_1, \lambda_2, \lambda_3)
=\sqrt{-1}(\lambda_1^2-\lambda_2^2+\lambda_3^2)(\overline{w_2^1}\wedge w_3^1\wedge\overline{ w_3^2}-w_2^1\wedge \overline{w_3^1}\wedge w_3^2).
\end{eqnarray*}

\noindent Thus $d\mathbb{K}_1(\lambda_1, \lambda_2, \lambda_3)=0$ if and only if $\lambda_1^2+\lambda_2^2=\lambda_3^2$; $d\mathbb{K}_2(\lambda_1, \lambda_2, \lambda_3)^{(1,2)}=0$ (i.e. $\mathbb{K}_2(\lambda_1, \lambda_2, \lambda_3)$ is $(1, 2)$-symplectic, but not symplectic); $d\mathbb{K}_3(\lambda_1, \lambda_2, \lambda_3)=0$ if and only if $\lambda_1^2=\lambda_2^2+\lambda_3^2$; $d\mathbb{K}_4(\lambda_1, \lambda_2, \lambda_3)=0$ if and only if $\lambda_2^2=\lambda_1^2+\lambda_3^2$.

From the above exterior differential
formulas of $\mathbb{K}_i(\lambda_1, \lambda_2, \lambda_3)$, it also follows that $\mathbb{K}_i(\lambda_1, \lambda_2, \lambda_3)$  all satisfy the balanced metric condition, i.e.
$d\mathbb{K}_i(\lambda_1, \lambda_2, \lambda_3)\wedge \mathbb{K}_i(\lambda_1, \lambda_2, \lambda_3)$ $=0$, $i=1, 2, 3, 4$. In particular,
for $\mathbb{K}_2(\frac{1}{\sqrt{2}}, \frac{1}{\sqrt{2}}, \frac{1}{\sqrt{2}})$, we have
\begin{eqnarray*}
&&d\mathbb{K}_2(\frac{1}{\sqrt{2}}, \frac{1}{\sqrt{2}}, \frac{1}{\sqrt{2}})=3\mathrm{Re}(\rho),\\
&&d\mathrm{Im}(\rho)=-2\mathbb{K}_2(\frac{1}{\sqrt{2}}, \frac{1}{\sqrt{2}}, \frac{1}{\sqrt{2}})\wedge \mathbb{K}_2(\frac{1}{\sqrt{2}}, \frac{1}{\sqrt{2}}, \frac{1}{\sqrt{2}}),
\end{eqnarray*}
where $\rho=(\sqrt{-1})^3w_2^1\wedge \overline{w_3^1}\wedge w_3^2$. It is equivalent to say that $\mathbb{K}_2(\frac{1}{\sqrt{2}}, \frac{1}{\sqrt{2}}, \frac{1}{\sqrt{2}})$ is a nearly K\"{a}hler structure on $Z=\mathbb{P}(T^{1,0}\mathbb{C}P^2)$.

By direct calculations, we also have the following $\sqrt{-1}\partial\bar{\partial}$-formulas of $\mathbb{K}_1(\lambda_1, \lambda_2, \lambda_3)$, $\mathbb{K}_3(\lambda_1, \lambda_2, \lambda_3)$, $\mathbb{K}_4(\lambda_1, \lambda_2, \lambda_3)$:
\begin{eqnarray*}
&&\sqrt{-1}\partial\bar{\partial}\mathbb{K}_1(\lambda_1, \lambda_2, \lambda_3)
=(\sqrt{-1})^2(\lambda_1^2+\lambda_2^2-\lambda_3^2)(-w_2^1\wedge\overline{w_2^1}\wedge \overline{w_3^1}\wedge w_3^1\\
&&~~~~~~~~~~~~~~~~~~~~~~~~~~~~~~~~~~~~~~~~~+\overline{w_3^1}\wedge w_3^1\wedge\overline{w_3^2}\wedge w_3^2+\overline{w_3^2}\wedge w_3^2\wedge w_2^1\wedge\overline{w_2^1}),\\
&&\sqrt{-1}\partial\bar{\partial}\mathbb{K}_3(\lambda_1, \lambda_2, \lambda_3)
=(\sqrt{-1})^2(\lambda_2^2+\lambda_3^2-\lambda_1^2)(w_2^1\wedge\overline{w_2^1}\wedge w_3^1\wedge\overline{w_3^1}\\
&&~~~~~~~~~~~~~~~~~~~~~~~~~~~~~~~~~~~~~~~~~-w_3^1\wedge\overline{w_3^1}\wedge\overline{w_3^2}\wedge w_3^2+\overline{w_3^2}\wedge w_3^2\wedge w_2^1\wedge\overline{w_2^1}),\\
&&\sqrt{-1}\partial\bar{\partial}\mathbb{K}_4(\lambda_1, \lambda_2, \lambda_3)
=(\sqrt{-1})^2(\lambda_1^2+\lambda_3^2-\lambda_2^2)(w_2^1\wedge\overline{w_2^1}\wedge w_3^1\wedge\overline{w_3^1}\\
&&~~~~~~~~~~~~~~~~~~~~~~~~~~~~~~~~~~~~~~~~~+w_3^1\wedge\overline{w_3^1}\wedge w_3^2\wedge\overline{w_3^2}-w_3^2\wedge\overline{w_3^2}\wedge w_2^1\wedge\overline{w_2^1}).
\end{eqnarray*}

\bigskip
\footnotesize
\noindent\textit{Acknowledgments.}
Part of the work was done while the second author was visiting Laboratory of Mathematics for Nonlinear Science, Fudan University; he would like to  thank them for the warm hospitality and supports. Fu is supported in part by NSFC  10831008 and  11025103. Zhou is supported in part by NSFC 11501505.

\end{document}